\DeclareMathOperator{\conv}{conv}
\DeclareMathOperator{\diag}{diag}
\DeclareMathOperator{\Diag}{Diag}
\newtheorem{theorem}{Theorem}
\newtheorem{lemma}{Lemma}
\newtheorem{conjecture}{Conjecture}
\renewcommand{\Dot}{\bullet}
\newcommand{\Rbb}{\mathbb{R}}
\newcommand{\Sbb}{{\mathbb{S}}}
\newcommand{\tran}{^T}
\newcommand{\leaveout}[1]{}
\newcommand{\set}[1]{\{ #1 \}}
\newcommand{\suchthat}{\,:\,}
\newcommand{\gesem}{\succeq}
\newcommand{\gtsem}{\succ}
\newcommand{\alphahat}{\hat{\alpha}}
\newcommand{\alphabar}{{\bar\alpha}}
\newcommand{\alphaprime}{\alpha^\prime}
\newcommand{\lam}{\lambda}
\newcommand{\Xbar}{\bar{X}}
\newcommand{\xbar}{\bar{x}}
\newcommand{\SDP}{{\rm PSD}}
\newcommand{\RLT}{{\rm RLT}}
\newcommand{\PER}{{\rm PER}}
\newcommand{\CP}{{\rm CP}}
\newcommand{\DNN}{{\rm DNN}}
\newcommand{\Hcal}{{\cal H}}
\newcommand{\Ccal}{{\cal C}}
\newcommand{\Dcal}{{\cal D}}
\title{Quadratic Optimization with Switching Variables: \\
The Convex Hull for $n=2$}
\author{%
Kurt M. Anstreicher%
\thanks{Department of Business Analytics, University of Iowa,
Iowa City, IA, 52242-1994, USA. Email: {\tt kurt-anstreicher@uiowa.edu}.}%
\and
Samuel Burer%
\thanks{Department of Business Analytics, University of Iowa,
Iowa City, IA, 52242-1994, USA. Email: {\tt samuel-burer@uiowa.edu}.}%
}
\date{February 9, 2020}
\begin{document}

\maketitle

\begin{abstract}
We consider quadratic optimization in variables $(x,y)$ where $0\le x\le y$, and
$y\in\set{0,1}^n$.  Such binary $y$ are commonly refered to as {\em indicator\/} or
{\em switching\/} variables and occur commonly in applications.  One approach to such problems
is based on representing or approximating the convex hull of the set $\set{ (x,xx\tran, yy\tran)\suchthat 0\le
x\le y\in\set{0,1}^n}$.  A representation for the case $n=1$ is known and has been widely
used. We give an exact representation for the case $n=2$ by starting with a disjunctive
representation for the convex hull and then eliminating auxilliary variables
and constraints that do not change the projection onto the original variables.
An alternative derivation for this representation leads to an appealing conjecture for
a simplified representation of the convex hull for $n=2$ when the product term $y_1y_2$ is
ignored.

\medskip\noindent
{\bf Keywords:} Quadratic optimization, switching variables, convex hull, perspective cone, semidefinite programming.
\end{abstract}

\section{Introduction}

This paper concerns quadratic optimization in variables $x\in\Rbb^n$
and $y\in\set{0,1}^n$, where $0\le x \le y$. The $y$ variables are
refered to as {\em indicator\/} or {\em switching\/} variables and
occur frequently in applications, including electrical power production
\cite{Frangioni.Gentile.2006}, constrained portfolio optimization
\cite{Frangioni.Gentile.2006,Gunluk.Linderoth.2010}, nonlinear machine
scheduling problems \cite{Akturk.et.al.2009} and chemical pooling
problems \cite{DAmbrosio.et.al.2011}. A typical feature of such problems
is that the objective function is separable in $x$ and $y$. In addition,
many applications do not involve the cross-terms $y_i y_j$ for $i \ne
j$.

One approach for such problems is to consider symmetric matrix variables
$X$ and $Y$ that replace the rank-1 matrices $xx\tran$ and $yy\tran$,
respectively. Using such variables, an objective of the form $c\tran x  +
x\tran Q x + y\tran Dy$ can be replaced by the linear function $c\tran
x + Q\Dot X + D\Dot Y$, where $(x,X,Y)$ should then be in
the set
\[
\Hcal := \conv\set{ (x,xx\tran, yy\tran)\suchthat 0\le
x\le y\in\set{0,1}^n}.
\]
 The problem is then to represent $\Hcal$ in
a computable manner. Note that, because $y$ is binary, $\diag(Y)$ captures $y$,
and in particular, when the cross-terms
$y_i y_j$ are not of interest, we may consider the simpler convex hull
\[
\Hcal' := \conv\set{ (x,xx\tran, y)\suchthat 0\le x\le y\in\set{0,1}^n}.
\]

For general $n$, determining computable representations of
$\Hcal$ and $\Hcal'$ is difficult. For example, even when $y$ is
fixed to $e$, the resulting convex hull, called $\text{QPB}$ in
\cite{Burer.Letchford.2009} for ``quadratic programming over the box,''
is intractable. When $n = 2$, an exact representation for QPB was given
in \cite{Anstreicher.Burer.2010}, but such a representation is not
known for $n \ge 3$. For general $n$, the paper
\cite{Dong.Linderoth.2013} studies valid inequalities for $\Hcal'$. For
the case $n=1$, $\Hcal = \Hcal'$ since there are no cross-terms, and
a computable representation was given in \cite{Gunluk.Linderoth.2010}
based on prior work in \cite{Frangioni.Gentile.2006}. This
representation has subsequently been used in a variety of applications;
see for example \cite{Hijazi.et.al.2017,Jabr.2012}. Several authors
have also studied the case when $n = 2$ but have focused on convexifying
in the space of $(x,y,t)$, where $t$ is a scalar associated with the epigraph
of a specially structured quadratic function, e.g., a convex quadratic one;
see \cite{Atamturk.Gomez.2018} and references therein.

In Section \ref{sec:n=1}, we consider the case of $n=1$ and 
reprove the representation of $\Hcal
= \Hcal'$ in a new way which incorporates standard ideas from the
literature on constructing strong semidefinite programming (SDP)
relaxations of quadratic programs. In particular, our proof can be
viewed as establishing that $\Hcal$ for $n=1$ is captured exactly by the
relaxation which uses the standard positive semidefinite (PSD) condition
along with the standard Reformulation--Linearization Technique (RLT)
constraints \cite{Sherali.Adams.1997}.

Our main result in this paper is a representation of $\Hcal$ for $n
= 2$, which we derive in several steps. Note that in this case
there is only a single cross-term $y_1 y_2$, and we can write
$\Hcal$ in the form
\[
\Hcal = \conv\set{ (x,xx\tran, y,y_1y_2)\suchthat 0\le
x\le y\in\set{0,1}^2}.
\]
First, in Section \ref{sec:disj}, we give a disjunctive
representation of $\Hcal$ that involves additional variables
$\alpha\in\Rbb^2$, $\beta\in\Rbb^2$. Then in Section \ref{sec:beta}
we project out $\beta$ by replacing a single PSD constraint with
four PSD constraints. The primary effort in the paper
occurs in Section \ref{sec:psd}, where we show that it is in fact
only necessary to impose one of these four PSD constraints in order
to represent $\Hcal$. This analysis is relatively complex due to
the fact that we are attempting to characterize the projection of
$(x,X,y,Y_{12},\alpha)$ onto $(x,X,y,Y_{12})$ where the constraints on
$(x,X,y,Y_{12},\alpha)$ include PSD conditions. If all constraints
on $(x,X,y,Y_{12},\alpha)$ were linear, we
could use standard polyhedral techniques to perform this projection.
However, since our case includes PSD conditions,
we are unaware of any general methodolgy for
characterizing such a projection, and therefore our proof technique is
tailored to the structure of $\Hcal$ for $n = 2$.

Finally, in Section \ref{sec:conjecture}, we describe an
alternative derivation for the representation of $\Hcal$ obtained in
Section \ref{sec:psd}. This derivation provides another interpretation
for the single remaining PSD condition and also leads to a conjecture
that a weaker PSD condition is sufficient to characterize $\Hcal'$ for
$n = 2$. If true, this conjecture would establish that $\Hcal'$ can be
represented using PSD, RLT, and simple linear conditions derived from
the binary nature of $y$, thus generalizing the results of Section
\ref{sec:n=1} for $n = 1$ as well as the representation of QPB
for $n=2$ from \cite{Anstreicher.Burer.2010}. This conjecture is supported
by extensive numerical computations but remains unproved.

\medskip

\noindent{\bf Notation.} We use $e$ to denote a vector of arbitrary
dimension with each component equal to one, and $e_i$ to denote an
elementary vector with all components equal to zero exept for a one in
component $i$. For symmetric matrices $X$ and $Y$, $X\gesem Y$ denotes
that $X-Y$ is positive semidefinite (PSD) and $X\gtsem Y$ denotes that
$X-Y$ is positive definite. The vector whose components are those
of the diagonal entries of a matrix $X$ is denoted $\diag(X)$.
The convex hull of a set is denoted $\conv\set{\cdot}$.

\section{\boldmath The convex hull for $n =1$}\label{sec:n=1}

In this section we consider the representation of $\Hcal$ for $n = 1$;
note that $\Hcal = \Hcal'$ in this case. The representation given
in Theorem \ref{thm:n=1} below is known, but to our knowledge the proof
given here is new. We define
\[
    \PER := \left\{
        (\alpha,\beta,\gamma) \in \Rbb \times \Rbb \times \Rbb :
        \begin{array}{l}
        \alpha^2 \le \beta \gamma \\
        0 \le \beta \le \alpha \le \gamma
        \end{array}
    \right\}
\]
to be the so-called {\em perspective cone\/} in $\Rbb^3$.
In particular, the constraint $\alpha^2 \le \beta \gamma$ is called a {\em perspective} constraint in the literature \cite{Gunluk.Linderoth.2010}.

\begin{theorem} \label{thm:n=1}
  For $n = 1$, $\Hcal = \Hcal' = \{ (x_1, X_{11}, y_1) \in \PER : y_1 \le 1 \}$.
\end{theorem}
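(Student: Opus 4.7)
My plan is to establish the set equality by proving containment in both directions. Write $\Ccal := \{(x_1,X_{11},y_1) \in \PER : y_1 \le 1\}$ for brevity.

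\emph{Forward inclusion $\Hcal \subseteq \Ccal$.} Since $\Ccal$ is convex---the perspective inequality $x_1^2 \le X_{11} y_1$ together with $X_{11},y_1\ge 0$ is equivalent to positive semidefiniteness of the $2\times 2$ matrix with diagonal $(y_1,X_{11})$ and off-diagonal entry $x_1$, and the remaining constraints are linear---it suffices to check $(x,x^2,y)\in\Ccal$ for every generator with $0 \le x \le y \in \{0,1\}$. When $y=0$ everything vanishes; when $y=1$ the chain $0 \le x^2 \le x \le 1$ and the relation $x^2 \le x^2 \cdot 1$ are immediate.

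\emph{Reverse inclusion $\Ccal \subseteq \Hcal$.} Given $(x_1,X_{11},y_1) \in \Ccal$, I build an explicit convex combination of generators. If $y_1 = 0$, the chain $0 \le X_{11} \le x_1 \le 0$ forces $(x_1,X_{11},y_1) = (0,0,0)$, which is itself a generator. Otherwise $y_1 > 0$, and I rescale by setting $p := x_1/y_1$ and $P := X_{11}/y_1$; the defining inequalities of $\PER$ then translate into $0 \le p \le 1$ and $p^2 \le P \le p$. This says $(p,P)$ lies in the planar region bounded below by $P = p^2$ and above by the chord $P = p$, so when $0 < p < 1$ I can write
\[
(p,P) \;=\; \mu\,(p,p) \,+\, (1-\mu)\,(p,p^2), \qquad \mu := \frac{P - p^2}{p - p^2},
\]
handling $p \in \{0,1\}$ by direct inspection. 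Since $(p,p,1) = p\cdot(1,1,1) + (1-p)\cdot(0,0,1)$ and each of $(0,0,1)$, $(1,1,1)$, $(p,p^2,1)$ is a generator of $\Hcal$, I conclude $(p,P,1) \in \Hcal$. Finally
\[
(x_1,X_{11},y_1) \;=\; y_1\cdot(p,P,1) \,+\, (1-y_1)\cdot(0,0,0)
\]
displays $(x_1,X_{11},y_1)$ as a convex combination of elements of $\Hcal$.

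To align with the PSD+RLT narrative highlighted by the authors, I would also note that the linear chain $0 \le X_{11} \le x_1 \le y_1 \le 1$ arises from pairwise products of the defining inequalities $0 \le x_1$, $x_1 \le y_1$, $y_1 \le 1$, linearized using the binary identity $y_1^2 = y_1$ and its consequence $x_1 y_1 = x_1$ (which holds because $0 \le x_1 \le y_1 \in \{0,1\}$), while the perspective inequality $x_1^2 \le X_{11} y_1$ is the single PSD condition on the lifted moment matrix. The main challenge at $n=1$ is conceptual rather than technical: recognizing that PSD+RLT already captures $\Hcal$ exactly, with no auxiliary variables or projections required---the rest is one-dimensional convex-combination bookkeeping. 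The real difficulty begins at $n=2$, where the analogous question becomes substantially more subtle.
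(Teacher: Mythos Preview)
Your proof is correct but follows a genuinely different route from the paper's. You argue both inclusions directly: the forward one by convexity of $\Ccal$ plus checking generators, and the reverse one by an explicit convex-combination construction (rescale by $y_1$, interpolate between the parabola $P=p^2$ and the chord $P=p$, then mix with the origin). This is elementary and self-contained.

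The paper instead lifts to a $4\times 4$ doubly nonnegative matrix $W$ in the variables $(1,x_1,s_1,t_1)$ with $s_1=y_1-x_1$ and $t_1=1-y_1$, invokes the completely-positive reformulation of \cite{Burer.2009} together with the fact that $\DNN=\CP$ in dimension $4$, and then reduces $W\succeq 0$ via a congruence to the $2\times 2$ condition $y_1 X_{11}\ge x_1^2$. The payoff of the paper's argument is structural: it literally exhibits $\Hcal$ as the feasible set of the standard $\SDP$+$\RLT$ relaxation, which is the theme driving the $n=2$ analysis and the conjecture in Section~\ref{sec:conjecture}. Your final paragraph gestures at this interpretation, but your proof mechanism does not actually pass through it. Conversely, your argument is shorter, requires no external results on completely positive matrices, and would be the natural choice if one only cared about the $n=1$ statement in isolation.
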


\begin{proof}
Let $t_1=1-y_1$. Then the constraints  $0\le x_1\le y_1, y_1\in\set{0,1}$ can be written in the form $x_1+s_1+t_1=1$, $x_1\ge 0$, $s_1\ge 0$, $t_1\in\set{0,1}$.  By relaxing
the rank-one matrix $(1,x_1,s_1,t_1)^T (1,x_1,s_1,t_1)$ we obtain a matrix
\begin{equation}\label{eq:W:n=1}
W=\begin{pmatrix}
 1 & x_1 & s_1 & t_1 \\
 x_1 & X_{11} & Z_{11} & 0\\
 s_1 & Z_{11} & S_{11} & 0 \\
 t_1 & 0 & 0 & t_1
 \end{pmatrix},
\end{equation}
where we are using the fact that, for binary $t_1$, it holds that $t_1^2=t_1$ and $x_1t_1=s_1t_1=0$.  Multiplying $x_1+s_1+t_1=1$ in turn
 by the variables $x_1$ and $s_1$, we next obtain the RLT constraints
 $X_{11}+Z_{11}=x_1$ and $S_{11}+Z_{11}=s_1$. Let
 \begin{eqnarray*}
 \Ccal&=&\conv\set{(1, x_1,s_1,t_1)\tran (1, x_1,s_1,t_1)\suchthat x_1+s_1+t_1=1, x_1\ge 0, s_1\ge 0, t_1\in\set{0,1}},\\
\Dcal&=&\set{W\in\DNN\suchthat x_1+s_1+t_1=1, X_{11}+Z_{11}=x_1, S_{11}+Z_{11}=s_1},
\end{eqnarray*}
where the matrix $W$ in the definition of $\Dcal$ has the form \eqref{eq:W:n=1}, and $\DNN$ denotes the cone of
doubly nonnegative matrices, that is, matrices that are both componentwise nonnegative and PSD.
We claim that $\Ccal=\Dcal$.  The inclusion $\Ccal\subset\Dcal$ is obvious by standard SDP-relaxation techniques. However, from
\cite[Corollary 2.5]{Burer.2009} we know that
\[
\Ccal =
\{W\in\CP\suchthat x_1+s_1+t_1=1,\ X_{11}+S_{11}+t_1+2Z_{11}=1\},
\]
where $\CP$ denotes the cone of completely positive matrices, that is, matrices that can be represented as a sum of
nonnegative rank-one matrices. Note that  $X_{11}+S_{11}+t_1+2Z_{11}=1$ is the  ``squared'' constraint obtained by
 substituting appropriate variables into the expression  $(x_1+s_1+t_1)^2=1$. Then $\Ccal=\Dcal$ follows from the facts that since $W$ is $4\times 4$,
 $W\in\CP \iff W \in\DNN$, and the constraints  $x_1+s_1+t_1=1$, $X_{11}+Z_{11}=x_1$ and $S_{11}+Z_{11}=s_1$ together imply $X_{11}+S_{11}+t_1+2Z_{11}=1$.

 From $\Ccal=\Dcal$ we conclude that $\conv\set{(x_1,x_1^2,y_1)\suchthat 0\le x_1\le y_1, y_1\in\set{0,1}}=
  \set{(x_1, X_{11}, 1-t_1) \suchthat x_1+s_1+t_1=1, X_{11}+Z_{11}=x_1, S_{11}+Z_{11}=s_1, W\in\DNN}$.
 To complete the proof we will simplify the condition that $W\gesem 0$. Note that
\[
 W
=\begin{pmatrix} 1 & 0 &0\\ 0 & 1 &0\\ 1 & -1 & -1\\ 0 & 0 & 1 \end{pmatrix}
 \begin{pmatrix}
 1& x_1  & t_1\\
 x_1 & X_{11}& 0 \\
 t_1 & 0 & t_1\\
 \end{pmatrix}
  \begin{pmatrix} 1 & 0 & 1 & 0\\ 0 & 1 & -1 & 0 \\ 0 & 0 & -1 & 1\end{pmatrix}.
 \]
 Then $W\gesem 0$ if and only if
 \[
\begin{pmatrix} 1 & x_1 & t_1 \\
    x_1 & X_{11} & 0 \\
    t_1 & 0 & t_1
\end{pmatrix}
\gesem 0
\ \
\Leftrightarrow
\ \
 \begin{pmatrix}
  1-t_1 & x_1  \\
 x_1 & X_{11} \\
 \end{pmatrix} \gesem 0,
 \]
 which using $y_1=1-t_1$ is equivalent to $y_1 \ge 0$, $X_{11}\ge 0$, $y_1X_{11}\ge x_i^2$. The conditions of the theorem
 thus insure that $W\in\DNN$, where $t_1=1-y_1\ge0$, $s_1=1-t_1-x_1=y_1-x_1\ge0$, $Z_{11}=x_1-X_{11}\ge0$ and $S_{11}=1+X_{11}-2x_1-t_1=y_1+X_{11}-2x_1\ge0$.
 \end{proof}

Note that the characterization in Theorem \ref{thm:n=1} is
sometimes written in terms of the lower convex envelope rather than the
convex hull, in which case the condition $X_{11}\le x_1$ is omitted.

\section{\boldmath The disjunctive convex hull for $n=2$} \label{sec:disj}

In this section, we develop an explicit disjunctive formulation for the
convex hull $\Hcal$ when $n = 2$. As described in the Introduction,
we will use that fact that $\diag(Y) = y$ and that there is only one cross-term $y_1y_2$ to
write $(x,X,y,Y_{12})$ for points in $\Hcal$.

The representation for $\Hcal$ obtained in this section is based on the
four values of $y \in \{0,1\}^2 = \{0, e_1, e_2, e\}$. Specifically,
note that $\Hcal = \text{conv}(\Hcal_0 \cup \Hcal_{e_1} \cup \Hcal_{e_2}
\cup \Hcal_e)$, where for each fixed $y$,
\[
    \Hcal_y := \text{conv}\left\{ (x, xx^T, y, y_1 y_2) : 0 \le x \le y \right\}.
\]
Each such $\Hcal_y$ has a known representation. $\Hcal_0$ is just a
singleton, and for $y=e_1$ and $y=e_2$ representations based on $\PER$
are provided by Theorem \ref{thm:n=1}. For $y=e$, a representation is
given in \cite{Anstreicher.Burer.2010} as follows. Define
\[
    \RLT_x := \left\{ \begin{pmatrix} \lam & x\tran \\ x & X \end{pmatrix} \suchthat
        \begin{array}{l}
            \lam\ge 0, \ 0\le \diag(X) \le x \\
            \max\{0, x_1 + x_2 - \lam\} \le X_{12} \le \min\{x_1, x_2\}
        \end{array}
    \right\},
\]
which is the homogenization of those points $(x,X)$ satisfying the
standard RLT constraints associated with $0\le x \le e$. Then \cite{Anstreicher.Burer.2010}
\[
    \Hcal_e = \left\{ (x,X,y,Y_{12}) :
        \begin{pmatrix} 1 & x^T \\ x & X \end{pmatrix} \in \SDP \cap \RLT_x,
        \ y = e, \ Y_{12} = 1
    \right\},
\]
where PSD denotes the cone of positive semidefinite matrices. In the sequel we will also need
\[
    \RLT_y := \left\{ (y, Y_{12}) \in \Rbb^2 \times \Rbb :
        \begin{array}{l}
            \max\{0, y_1 + y_2 - 1\} \le Y_{12} \le \min\{y_1, y_2\}
        \end{array}
    \right\},
\]
which gives the convex hull of $(y, y_1y_2)$ over all four $y \in
\{0,1\}^2$. Note that $\RLT_y$ is a polytope, unlike $\PER$, $\RLT_x$ and $\SDP$, which
are convex cones.

In many applications, the product $y_1y_2$ is not of interest, so it
is also natural to consider the convex hull $\Hcal'$ that ignores
this product. Based on the known representations for $\Hcal_{e_1}$,
$\Hcal_{e_2}$ and $\Hcal_e$, $\Hcal^\prime$ is certainly contained in
the set of $(x,X,y)$ satisfying the constraints
\begin{eqnarray*}
    \begin{pmatrix} 1 & x\tran\\  x & X\end{pmatrix}
    &\in& \SDP \cap \RLT_x \\ 
    (x_j, X_{jj}, y_j ) &\in& \PER,\quad  y_j\le 1 \ \ \forall \ j = 1,2.\nonumber
\end{eqnarray*}
However it is easy to generate examples that satisfy these constraints
but are not in $\Hcal^\prime$. In the next theorem we will focus
on $\Hcal$, but we will return to a discussion of
$\Hcal^\prime$ in Section \ref{sec:conjecture}.

\begin{theorem} \label{thm:disj}
$\Hcal$ equals the projection onto $(x, X, y, Y_{12})$ of $(x, X, y, Y_{12},\alpha,\beta)$ satisfying the convex
constraints
\begin{subequations} \label{equ:disj}
\begin{align}
    &x \le y \\
    &\begin{pmatrix} Y_{12} & (x - \alpha)^T \\ x - \alpha & X - \Diag(\beta) \end{pmatrix}
    \in \SDP \cap \RLT_x \label{equ:sdprlt} \\
    &(\alpha_j, \beta_j, y_j - Y_{12}) \in \PER \ \ \forall \ j = 1,2 \label{equ:per} \\
    &(y, Y_{12}) \in \RLT_y
\end{align}
\end{subequations}
where $\alpha\in\Rbb^2$, $\beta \in \Rbb^2$ are auxiliary variables.
\end{theorem}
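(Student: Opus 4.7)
The plan is to exploit the decomposition $\Hcal = \conv(\Hcal_0 \cup \Hcal_{e_1} \cup \Hcal_{e_2} \cup \Hcal_e)$ from the introductory discussion and apply a standard disjunctive programming argument: $(x, X, y, Y_{12})$ lies in $\Hcal$ if and only if it can be written as $\sum_{y'} \mu_{y'} z_{y'}$ with $z_{y'} \in \Hcal_{y'}$ and nonnegative weights $\mu_{y'}$, indexed by $y' \in \set{0,e_1,e_2,e}$, summing to one. The natural choice of weights, read directly off $(y,Y_{12})$, is $\mu_0 = 1 - y_1 - y_2 + Y_{12}$, $\mu_{e_j} = y_j - Y_{12}$ for $j=1,2$, and $\mu_e = Y_{12}$; the condition $(y, Y_{12}) \in \RLT_y$ is exactly what certifies that these four weights are nonnegative and sum to one. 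In this picture, $\alpha_j$ plays the role of the $\mu_{e_j}$-scaled value of $x_j$ coming from the $\Hcal_{e_j}$ piece and $\beta_j$ plays the role of the correspondingly scaled $X_{jj}$, while $x-\alpha$ and $X-\Diag(\beta)$ are the $\mu_e$-scaled contributions from $\Hcal_e$.

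For the easy direction, $\Hcal \subseteq \text{projection}$, I would check that each generator $(x, xx\tran, y, y_1 y_2)$ with $0\le x\le y \in \set{0,1}^2$ of $\Hcal$ admits a feasible lift: for $y=0$ or $y=e$ take $\alpha=0$ and $\beta=0$; for $y=e_j$ take $\alpha_j = x_j$ and $\beta_j = x_j^2$ with the other components of $\alpha$ and $\beta$ equal to zero. Each verification reduces to a direct check of the $\PER$, $\SDP\cap\RLT_x$, and $\RLT_y$ constraints, with Theorem \ref{thm:n=1} (for the $\Hcal_{e_j}$ fiber) and the $\SDP\cap\RLT_x$ representation of $\Hcal_e$ from \cite{Anstreicher.Burer.2010} (for the $\Hcal_e$ fiber) being exactly what is needed. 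Convexity of the feasible region of \eqref{equ:disj}, and hence of its projection, then upgrades this check on generators to all of $\Hcal$.

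For the harder direction, projection $\subseteq \Hcal$, given a feasible $(x, X, y, Y_{12}, \alpha, \beta)$, I would form the four weights above. Whenever $\mu_{e_j} > 0$, the conic nature of $\PER$ gives $(\alpha_j/\mu_{e_j}, \beta_j/\mu_{e_j}, 1) \in \PER$, so by Theorem \ref{thm:n=1} the lifted point lies in $\Hcal_{e_j}$; likewise, whenever $Y_{12} > 0$, the conic nature of $\SDP \cap \RLT_x$ combined with \eqref{equ:sdprlt} produces a valid representative of $\Hcal_e$ from $((x-\alpha)/Y_{12}, (X-\Diag(\beta))/Y_{12})$. Taking the convex combination of these points with the singleton in $\Hcal_0$ and weights $\mu_{y'}$ reproduces $(x, X, y, Y_{12})$ by immediate telescoping: $\mu_{e_1} e_1 + \mu_{e_2} e_2 + \mu_e e = y$, $\mu_e \cdot 1 = Y_{12}$, $\alpha + (x-\alpha) = x$, and $\Diag(\beta) + (X - \Diag(\beta)) = X$.

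The main obstacle is the degenerate faces on which some $\mu_{y'}$ vanishes, so that literal unscaling by $\mu_{y'}$ is not defined. This is handled by the closure behavior of the perspective and PSD cones: when $\mu_{e_j} = 0$, the bounds $0 \le \beta_j \le \alpha_j \le \mu_{e_j}$ inside $\PER$ force $\alpha_j = \beta_j = 0$ and the $\Hcal_{e_j}$ branch simply drops out of the convex combination; when $Y_{12} = 0$, a zero diagonal entry of the PSD matrix in \eqref{equ:sdprlt} forces $x - \alpha = 0$, while the RLT bounds on $\diag(X-\Diag(\beta))$ and on the off-diagonal then pin down $X = \Diag(\beta)$, so the $\Hcal_e$ branch also drops out cleanly. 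Finally, the constraint $x \le y$ in \eqref{equ:disj} turns out to be implied by the others (combining $\alpha_j \le \mu_{e_j}$ from $\PER$ with the RLT-derived bound $x_j - \alpha_j \le Y_{12}$ on the $e$-branch gives $x_j \le y_j$), but listing it explicitly is harmless and keeps the disjunctive interpretation transparent.
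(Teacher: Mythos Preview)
Your proposal is correct and follows essentially the same route as the paper's proof: the same four convex-combination weights $\mu_{y'}$ (the paper calls them $\lam_y$), the same lifts $(\alpha,\beta)$ on the generators for the easy inclusion, and the same unscaling via the conic structure of $\PER$ and $\SDP\cap\RLT_x$ for the reverse inclusion. Your explicit treatment of the degenerate faces and the remark that $x\le y$ is implied are slight elaborations, but the argument is the same disjunctive decomposition the paper gives.
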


\begin{proof}
We first argue that (\ref{equ:disj}) is a relaxation of $\Hcal$ in
the lifted space that includes $\alpha$ and $\beta$. It suffices to
show that each ``rank-1'' solution $(x,xx^T,y,y_1y_2)$ for $y \in
\{0,1\}^2$ can be extended in $(\alpha,\beta)$ to a feasible solution
of (\ref{equ:disj}), and we handle the four cases for $y \in \{0,1\}^2$
separately. We clearly always have $x \le y$ and $(y, Y_{12}) \in \RLT_y$,
so it remains to check that (\ref{equ:sdprlt}) and (\ref{equ:per}) hold in
each case.

We introduce the notation
\[
    Z := \begin{pmatrix}
        Y_{12} & (x - \alpha)^T \\ x - \alpha & X - \Diag(\beta)
    \end{pmatrix}.
\]
First, let $y = 0 \Rightarrow x = 0$. Then $(x, xx^T, y, y_1 y_2) = (0,
0, 0, 0)$, and we choose $(\alpha, \beta) = (0, 0)$. Since all variables
are zero, it is straightforward to check that (\ref{equ:sdprlt}) and
(\ref{equ:per}) are satisfied. Second, let $y = e \Rightarrow 0 \le x
\le e$. Then $(x, xx^T, y, y_1 y_2) = (x, xx^T, e, 1)$, and we choose
$(\alpha, \beta) = (0, 0)$ for this case also, which yields $(\alpha_j,
\beta_j, y_j - Y_{12}) = (0,0,0) \in \PER$ for $j = 1,2$. Moreover,
\[
    Z = \begin{pmatrix} 1 & x^T \\ x & X \end{pmatrix}
    = \begin{pmatrix} 1 & x^T \\ x & xx^T \end{pmatrix} \in \SDP \cap
    \RLT_x,
\]
as desired.

Next we consider the case $y = e_1$, which implies $x_1 \le 1$ and $x_2
= 0$. Then $(x, xx^T, y, y_1 y_2) = (x_1 e_1, x_1^2 e_1 e_1^T, e_1, 0)$,
and we choose $(\alpha, \beta) = (x_1 e_1, x_1^2 e_1)$. Hence,
\[
    Z
    =\begin{pmatrix} 0 & (x - x_1 e_1)^T \\ x - x_1 e_1 & X - x_1^2 e_1 e_1^T \end{pmatrix}
    = 0 \in \SDP \cap \RLT_x,
\]
satisfying (\ref{equ:sdprlt}). Moreover, $(\alpha_1, \beta_1, y_1 - y_1
y_2) = (x_1, x_1^2, 1) \in \PER$ and $(\alpha_2, \beta_2, y_2 - y_1 y_2)
= (0, 0, 0) \in \PER$, so that (\ref{equ:per}) is satisfied. The final
case $y = e_2$ is similar. We have thus shown that (\ref{equ:disj}) is a
relaxation of $\Hcal$.

To complete the proof, we show the reverse containment, i.e., that any
$(x,X,y,Y_{12}, \alpha, \beta)$ satisfying (\ref{equ:disj}) is also a
member of $\Hcal$. Define the four scalars
\begin{equation}\label{equ:lam_y}
    \lam_0 := 1 - y_1 - y_2 + Y_{12},\ \ \lam_{e_1} := y_1 - Y_{12},\ \
    \lam_{e_2} := y_1 - Y_{12},\ \  \lam_e := Y_{12},
\end{equation}
and note that $(y,Y_{12}) \in \RLT_y$ implies $\lam_0 + \lam_{e_1}
+ \lam_{e_2} + \lam_e = 1$ with each term nonnegative, i.e.,
$(\lam_0,\lam_{e_1},\lam_{e_2},\lam_e)$ is a convex
combination. Next, letting $0/0 := 0$, define
\begin{alignat*}{2}
    Z_0 &:= \lam_0^{-1} \begin{pmatrix} \lam_0 & 0^T \\ 0 & 0 \end{pmatrix}
    &
    Z_{e_2} &:= \lam_{e_2}^{-1} \begin{pmatrix}
    \lam_{e_2} & \alpha_2 e_2^T \\
    \alpha_2 e_2 & \beta_2 e_2 e_2^T
    \end{pmatrix} \\
    Z_{e_1} &:= \lam_{e_1}^{-1} \begin{pmatrix}
    \lam_{e_1} & \alpha_1 e_1^T \\
    \alpha_1 e_1 & \beta_1 e_1 e_1^T
    \end{pmatrix} \quad\quad &
    Z_e &:= \lam_e^{-1} \begin{pmatrix} \lam_e & (x - \alpha)^T \\
        x - \alpha & X - \Diag(\beta)
    \end{pmatrix}.
\end{alignat*}
\noindent Note that $Z_y \in \Hcal_y$ for each $y \in \{0,1\}^2$;
for $y=e_1$ and $y=e_2$ we use the representation from
Theorem \ref{thm:n=1}, and for $y=e$ we use the result from
\cite{Anstreicher.Burer.2010} stated above this theorem. Hence, the
easily verified equations $(y, Y_{12}) = \lam_0 (0,0) + \lam_{e_1}
(e_1,0) + \lam_{e_2} (e_2,0) + \lam_e (e,1)$ and
\[
    \begin{pmatrix} 1 & x^T \\ x & X \end{pmatrix}
    = \lam_0 Z_0 + \lam_{e_1} Z_{e_1} + \lam_{e_2} Z_{e_2} + \lam_e Z_e,
    \]
establish that $(x,X,y,Y_{12}) \in \Hcal$.
\end{proof}

\section{\boldmath Eliminating $\beta$} \label{sec:beta}

System (\ref{equ:disj}) captures $\Hcal$ by projection from a lifted
space, which includes the additional variables $\alpha\in\Rbb^2$, $\beta
\in \Rbb^2$. In this section, we eliminate the $\beta$ variables from
\eqref{equ:disj}, but the price we pay is to replace the semidefinite
constraint in \eqref{equ:sdprlt} with PSD conditions on four matrices.
In Section \ref{sec:psd} we will will show that, in order to
obtain a characterization of $\Hcal$, it is in fact only necessary to
impose one of these four PSD conditions.

We begin by introducing some notation.
First, define the matrix function $M : \Rbb^2 \times \Sbb^2 \times \Rbb
\times \Rbb^2 \times \Rbb^2 \to \Sbb^3$ by
\begin{equation} \label{equ:M}
    M(\beta) := M(x,X,Y_{12},\alpha,\beta) :=
    \begin{pmatrix}
        Y_{12} & (x - \alpha)^T \\ x - \alpha & X - \Diag(\beta)
    \end{pmatrix}.
\end{equation}
The simplified notation $M(\beta)$ will be convenient because instances
of $M$ will only differ in the values of $\beta$; note also that $M$
does not depend on $y$. We also define four different functions
$\beta_{pq} : \Rbb^2 \times \Rbb \times \Rbb^2 \to \Rbb^2$ depending on
$(y,Y_{12},\alpha)$ for the indices $(p,q) \in \{1,2\}^2$, where $0/0 :=
0$:
%
\begin{align*}
    \beta_{11} &:= \beta_{11}(y,Y_{12},\alpha) := (X_{11}-x_1+\alpha_1,X_{22}-x_2+\alpha_2) \\
    \beta_{21} &:= \beta_{21}(y,Y_{12},\alpha) := \left( (y_1 - Y_{12})^{-1} \alpha_1^2, X_{22}-x_2+\alpha_2 \right) \\
    \beta_{12} &:= \beta_{12}(y,Y_{12},\alpha) := \left(X_{11}-x_1+\alpha_1, (y_2 - Y_{12})^{-1} \alpha_2^2 \right) \\
    \beta_{22} &:= \beta_{22}(y,Y_{12},\alpha) := \left( (y_1 - Y_{12})^{-1} \alpha_1^2, (y_2 - Y_{12})^{-1} \alpha_2^2 \right).
\end{align*}
%
As with $M(\beta)$, the shorter notation $\beta_{pq}$ will prove
more convenient. Note also that $p$ and $q$ are only index labels to
designate the four functions.  The result below replaces the PSD condition in \eqref{equ:sdprlt} with the four conditions
$M(\beta_{pq})\gesem 0$, $p,q\in \set{1,2}$.

\begin{theorem} \label{thm:nobeta}
$\Hcal$ equals the projection onto $(x, X, y, Y_{12})$ of $(x, X, y, Y_{12},\alpha)$ satisfying the convex
constraints
\begin{subequations} \label{equ:nobeta}
\begin{align}
    &\diag(X) \le x \le y \label{equ:nobeta:lin1} \\
    &\max\{0, x_1 - \alpha_1 + x_2 - \alpha_2 - Y_{12} \} \le X_{12} \le
    \min\{x_1 - \alpha_1, x_2 - \alpha_2 \} \label{equ:nobeta:lin2} \\
    &0 \le \alpha_j \le y_j - Y_{12} \ \ \forall \ j = 1,2 \label{equ:nobeta:lin3} \\
    &(y, Y_{12}) \in \RLT_y \label{equ:nobeta:lin4} \\
    &M(\beta_{11}) \succeq 0 \label{equ:3x3} \\
    &M(\beta_{12}) \succeq 0 \label{equ:4x4first} \\
    &M(\beta_{21}) \succeq 0 \label{equ:4x4second} \\
    &M(\beta_{22}) \succeq 0 \label{equ:5x5}.
\end{align}
\end{subequations}
\end{theorem}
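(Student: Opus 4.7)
The plan is, by Theorem \ref{thm:disj}, to show that the feasible set of \eqref{equ:nobeta} is exactly the projection onto $(x,X,y,Y_{12},\alpha)$ of the feasible set of \eqref{equ:disj}; both then project further to $\Hcal$. The key structural ingredient is that $M(\beta)$ is Loewner-monotone decreasing in $\beta$: the difference $M(\beta') - M(\beta)$ vanishes outside the lower-right $2 \times 2$ diagonal block, where it equals $\Diag(\beta - \beta')$. Consequently, the smaller $\beta$ is coordinatewise, the easier it is to satisfy $M(\beta) \gesem 0$, and eliminating $\beta$ reduces to identifying the coordinatewise-minimum feasible choice.

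For the forward direction, assume $(x, X, y, Y_{12}, \alpha, \beta)$ is feasible for \eqref{equ:disj}. The linear constraints \eqref{equ:nobeta:lin1}--\eqref{equ:nobeta:lin4} follow by unpacking the $\RLT_x$ part of \eqref{equ:sdprlt} together with the PER bounds $0 \le \beta_j \le \alpha_j \le y_j - Y_{12}$ from \eqref{equ:per}; in particular, $X_{jj} \le x_j$ follows by combining $X_{jj} - \beta_j \le x_j - \alpha_j$ with $\beta_j \le \alpha_j$. For the four PSD constraints \eqref{equ:3x3}--\eqref{equ:5x5}, note that the $\RLT_x$ diagonal inequality rearranges to $\beta_j \ge X_{jj} - x_j + \alpha_j$, while the perspective inequality $\alpha_j^2 \le \beta_j (y_j - Y_{12})$ rearranges to $\beta_j \ge \alpha_j^2 / (y_j - Y_{12})$. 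Hence $\beta_j \ge (\beta_{pq})_j$ coordinatewise for every $(p,q) \in \set{1,2}^2$, and Loewner monotonicity gives $M(\beta_{pq}) \gesem M(\beta) \gesem 0$.

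For the reverse direction, given $(x, X, y, Y_{12}, \alpha)$ satisfying \eqref{equ:nobeta}, I define
\[
\beta^*_j := \max\set{X_{jj} - x_j + \alpha_j, \ \alpha_j^2 / (y_j - Y_{12})}, \quad j = 1, 2,
\]
with the convention $0/0 := 0$; this is consistent because \eqref{equ:nobeta:lin3} forces $\alpha_j = 0$ whenever $y_j = Y_{12}$. The crucial observation is that $\beta^*$ coincides coordinatewise with exactly one of the four vectors $\beta_{pq}$---namely the one selecting, in each coordinate, the term realizing the maximum---so $M(\beta^*) \gesem 0$ is read off directly from the corresponding one of \eqref{equ:3x3}--\eqref{equ:5x5}. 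Verifying $(\alpha_j, \beta^*_j, y_j - Y_{12}) \in \PER$ and the $\RLT_x$ diagonal inequality $0 \le X_{jj} - \beta^*_j \le x_j - \alpha_j$ is routine: the upper bound $\beta^*_j \le \alpha_j$ uses $X_{jj} \le x_j$ from \eqref{equ:nobeta:lin1} together with $\alpha_j \le y_j - Y_{12}$ from \eqref{equ:nobeta:lin3}; the perspective inequality is immediate from the second argument of the max; and the nontrivial lower bound $X_{jj} - \beta^*_j \ge 0$ reduces to $\alpha_j^2 \le X_{jj}(y_j - Y_{12})$, which is exactly the diagonal nonnegativity extracted from $M(\beta_{22}) \gesem 0$. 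The off-diagonal $\RLT_x$ constraints on $X_{12}$ transfer verbatim from \eqref{equ:nobeta:lin2}.

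The main obstacle is organizational rather than mathematical: each inequality must be paired with its source constraint, and the $0/0$ edge case handled consistently. Conceptually, the proof hinges entirely on the Loewner monotonicity of $M$ in $\beta$ and on the combinatorial fact that the coordinatewise-minimum feasible $\beta^*$ always coincides with one of the four explicit $\beta_{pq}$.
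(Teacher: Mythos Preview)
Your proposal is correct and follows essentially the same route as the paper's proof: starting from Theorem~\ref{thm:disj}, you eliminate $\beta$ by observing that $M(\beta)$ is Loewner-decreasing in $\beta$, so the projection is captured by setting each $\beta_j$ to its minimal feasible value $\max\{X_{jj}-x_j+\alpha_j,\,\alpha_j^2/(y_j-Y_{12})\}$, which coincides with one of the four $\beta_{pq}$ and hence is equivalent to imposing all four conditions $M(\beta_{pq})\succeq 0$. Your write-up is in fact more explicit than the paper's about the bookkeeping (in particular the checks that $(\alpha_j,\beta^*_j,y_j-Y_{12})\in\PER$ and that $X_{jj}-\beta^*_j\ge 0$ via the diagonal of $M(\beta_{22})$), but the underlying argument is the same.
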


\begin{proof}

The proof is based on reformulating (\ref{equ:disj}), which using
$M(\beta)$ can be restated as
%
\begin{align*}
    &x \le y \\
    &M(\beta) \in \SDP \cap \RLT_x \\ 
    &(\alpha_j, \beta_j, y_j - Y_{12}) \in \PER \ \ \forall \ j = 1,2 \\
    &(y, Y_{12}) \in \RLT_y.
\end{align*}
In particular, considering $(x,X,y,Y_{12},\alpha)$ fixed,
the above system includes four linear conditions on $\beta$:
\[
    \beta_j \ge
    \max \left\{ (y_j - Y_{12})^{-1} \alpha_j^2, X_{jj} - x_j + \alpha_j \right\}
    \ \ \ \ \forall \ \ j = 1,2.
\]
Moreover, since decreasing $\beta_1$ and $\beta_2$ while holding all
other variables constant does not violate $M(\beta) \succeq 0$, we may
define $\beta_1$ and $\beta_2$ by
\[
    \beta_j(x,X,y,Y_{12},\alpha) :=
        \max \left\{ (y_j - Y_{12})^{-1} \alpha_j^2, X_{jj} - x_j + \alpha_j \right\}
        \ \ \ \ \forall \ \ j = 1,2
\]
without affecting the projection onto $(x,X,y,Y_{12})$. It follows that values
$(x, X, y, Y_{12},\alpha)$, which are feasible for \eqref{equ:nobeta:lin1}--\eqref{equ:nobeta:lin4},
are feasible for the constraints \eqref{equ:disj} if and only if $M(\beta_{pq})\gesem 0$,
$(p,q)\in\set{1,2}^2$.
\end{proof}

\leaveout{
In other words,
after simplification, $\Hcal$ equals the projection of
\begin{subequations} \label{equ:disj3}
\begin{align}
    &\diag(X) \le x \le y \\
    &\max\{0, x_1 - \alpha_1 + x_2 - \alpha_2 - Y_{12} \} \le X_{12} \le
    \min\{x_1 - \alpha_1, x_2 - \alpha_2 \} \\
    &0 \le \alpha_j \le y_j - Y_{12} \ \ \forall \ j = 1,2 \\
    &(y, Y_{12}) \in \RLT_y \\
    &M(\beta(x,X,y,Y_{12},\alpha)) \succeq 0.
\end{align}
\end{subequations}
Observing that $\beta(x,X,y,Y_{12},\alpha)$ is defined piecewise
by four functions, it follows that the matrix function
$M(\beta(x,X,y,Y_{12},\alpha))$ is also defined by four pieces. One can
easily check that these four pieces are precisely the four functions
$M(\beta_{pq})$. Moreover, by again exploiting the fact that
decreasing $\beta_1$ and $\beta_2$ preserves positive semidefiniteness,
we have
\[
    M(\beta(x,X,y,Y_{12},\alpha)) \succeq 0
    \ \ \Longleftrightarrow \ \
    M(\beta_{pq}) \succeq 0 \ \ \forall \ (p,q) \in \{1,2\}^2.
\]
So (\ref{equ:nobeta}) is equivalent to (\ref{equ:disj3}). This proves
the first part of the theorem.

The second part of the theorem follows by reformulating the PSD
conditions $M(\beta_{pq}) \succeq 0$ using the Schur complement
theorem.}

In Section \ref{sec:psd}, we will show that in order to obtain an exact representation of
$\Hcal$ only the condition $M(\beta_{22}) \succeq 0$ is required.  For
clarity in the exposition it is helpful to write out the conditions
$M(\beta_{pq}) \succeq 0$ explicitly. In particular,
 (\ref{equ:3x3}) can be written
\begin{equation} \label{equ:3x3'} \tag{\ref{equ:3x3}$'$}
    \begin{pmatrix} Y_{12} & x_1 - \alpha_1 & x_2 - \alpha_2 \\
        x_1 - \alpha_1 & x_1 - \alpha_1 & X_{12} \\
        x_2 - \alpha_2 & X_{12} & x_2 - \alpha_2
    \end{pmatrix} \succeq 0.
\end{equation}
In the remaining cases we can utilize the well-known Schur complement condition
to conclude that
(\ref{equ:4x4first}) is equivalent to
\begin{equation} \label{equ:4x4first'} \tag{\ref{equ:4x4first}$'$}
    \begin{pmatrix}
        y_1 - Y_{12} & 0 & \alpha_1 & 0 \\
        0 & Y_{12} & x_1 - \alpha_1 & x_2 - \alpha_2 \\
        \alpha_1 & x_1 - \alpha_1 & X_{11} & X_{12} \\
        0 & x_2 - \alpha_2 & X_{12} & x_2 - \alpha_2
    \end{pmatrix} \succeq 0,
\end{equation}
(\ref{equ:4x4second}) is equivalent to
\begin{equation} \label{equ:4x4second'} \tag{\ref{equ:4x4second}$'$}
    \begin{pmatrix}
        y_2 - Y_{12} & 0 & 0 & \alpha_2 \\
        0 & Y_{12} & x_1 - \alpha_1 & x_2 - \alpha_2 \\
        0 & x_1 - \alpha_1 & x_1 - \alpha_1 & X_{12} \\
        \alpha_2 & x_2 - \alpha_2 & X_{12} & X_{22}
    \end{pmatrix} \succeq 0,
\end{equation}
and (\ref{equ:5x5}) is equivalent to
\begin{equation} \label{equ:5x5'} \tag{\ref{equ:5x5}$'$}
    \begin{pmatrix}
        y_1 - Y_{12} & 0 & 0 & \alpha_1 & 0 \\
        0 & y_2 - Y_{12} & 0 & 0 & \alpha_2 \\
        0 & 0 & Y_{12} & x_1 - \alpha_1 & x_2 - \alpha_2 \\
        \alpha_1 & 0 & x_1 - \alpha_1 & X_{11} & X_{12} \\
        0 & \alpha_2 & x_2 - \alpha_2 & X_{12} & X_{22}
    \end{pmatrix} \succeq 0.
\end{equation}

\noindent In the statement of results in the sequel we will always refer to the conditions \eqref{equ:3x3}--\eqref{equ:5x5}, but
these statements may be easier to understand if the reader refers to \eqref{equ:3x3'}--\eqref{equ:5x5'}.

\section{Reducing to a single semidefinite condition} \label{sec:psd}

Theorem \ref{thm:nobeta} establishes that $\Hcal$ is described in part by
the four PSD conditions (\ref{equ:3x3})--(\ref{equ:5x5})---one of size
$3 \times 3$, two of size $4 \times 4$, and one of size $5 \times 5$.
In this section, we show that Theorem \ref{thm:nobeta} holds even if
(\ref{equ:3x3})--(\ref{equ:4x4second}) are not enforced.  We show
this in several steps. First, we prove that (\ref{equ:3x3}) is redundant.

\subsection{Condition (\ref{equ:3x3}) is redundant}

\begin{lemma} \label{pro:3x3redundant}
If $(x,X,y,Y_{12},\alpha)$ satisfies
(\ref{equ:nobeta:lin1})--(\ref{equ:nobeta:lin4}), then it satisfies
(\ref{equ:3x3}).
\end{lemma}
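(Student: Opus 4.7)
My plan is to verify the $3 \times 3$ PSD condition \eqref{equ:3x3'} directly by a Schur complement reduction followed by a one-variable concavity argument in $X_{12}$. Introduce the shorthand $u_j := x_j - \alpha_j$ so that the matrix in \eqref{equ:3x3'}, call it $N$, has diagonal $(Y_{12}, u_1, u_2)$, first-row off-diagonal entries $u_1, u_2$, and $(2,3)$-entry $X_{12}$.

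The first step is to read off from \eqref{equ:nobeta:lin1}--\eqref{equ:nobeta:lin4} the elementary bounds $Y_{12} \ge 0$, $0 \le X_{12} \le \min\{u_1, u_2\}$, and $0 \le u_j \le Y_{12}$ for $j = 1, 2$. Only $u_j \le Y_{12}$ is nontrivial; it follows for each $j$ by chaining the lower and upper RLT bounds $u_1 + u_2 - Y_{12} \le X_{12} \le u_{3-j}$ from \eqref{equ:nobeta:lin2}. These bounds already give nonnegativity of the diagonal of $N$ and of all three $2\times 2$ principal minors (for the $(2,3)$-minor, $u_1 u_2 \ge \min\{u_1,u_2\}^2 \ge X_{12}^2$), so the only remaining obligation is $\det N \ge 0$.

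To handle $\det N \ge 0$, I would first dispose of $Y_{12} = 0$ (which forces $u_1 = u_2 = X_{12} = 0$ and hence $N = 0$), and then for $Y_{12} > 0$ apply a Schur complement on the $(1,1)$-entry to reduce $\det N \ge 0$ to the scalar inequality
\[
(X_{12} Y_{12} - u_1 u_2)^2 \le u_1 u_2 (Y_{12} - u_1)(Y_{12} - u_2).
\]
With $u_1, u_2, Y_{12}$ held fixed, the left-hand side is a convex quadratic in $X_{12}$ and the right-hand side is constant, so this inequality defines a convex condition in $X_{12}$. Hence it suffices to verify it at the endpoints of the feasible interval $X_{12} \in [\max\{0, u_1 + u_2 - Y_{12}\}, \min\{u_1, u_2\}]$. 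At each endpoint, namely $X_{12} = u_1$ (assuming $u_1 \le u_2$), $X_{12} = 0$ (assuming $u_1 + u_2 \le Y_{12}$), or $X_{12} = u_1 + u_2 - Y_{12}$ (assuming $u_1 + u_2 \ge Y_{12}$), the inequality reduces after a short computation to one of the already-verified bounds $u_1 \le u_2$, $u_1 + u_2 \le Y_{12}$, or $u_1 + u_2 \ge Y_{12}$.

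The main obstacle, such as it is, is spotting the concavity in $X_{12}$ that collapses the problem to an endpoint check; without it one would have to certify $\det N \ge 0$ directly over the full feasible region in $(X, y, Y_{12}, \alpha)$, which is awkward since $X_{12}$ varies freely inside the RLT interval for each fixed $(u_1, u_2, Y_{12})$. The endpoint calculations themselves are elementary algebra.
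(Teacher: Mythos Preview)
Your argument is correct and is essentially the same as the paper's: both reduce to checking the endpoints of the $X_{12}$-interval by exploiting concavity of the relevant principal minors in $X_{12}$, verify the diagonal and $2\times 2$ minors from the bounds $0\le u_j\le Y_{12}$ and $0\le X_{12}\le\min\{u_1,u_2\}$, and then check the full determinant case-by-case at each endpoint. The only cosmetic difference is that you package the $3\times 3$ determinant via a Schur complement on the $(1,1)$ entry (hence your separate $Y_{12}=0$ case), whereas the paper computes $\det M(\beta_{11})$ directly at each endpoint.
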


\begin{proof}
Consider the linear conditions
(\ref{equ:nobeta:lin1})--(\ref{equ:nobeta:lin4}) of (\ref{equ:nobeta}).
In terms of the remaining variables, the constraints on $X_{12}$ are simple
bounds:
\[
    l := \max\{0, x_1 - \alpha_1 + x_2 - \alpha_2 - Y_{12} \}
    \le X_{12} \le
    \min\{ x_1 - \alpha_1, x_2 - \alpha_2 \} =: u.
\]
We claim that (\ref{equ:3x3}) is satisfed at both endpoints
$X_{12} = l$ and $X_{12} = u$, which will prove the
theorem since the determinant of every principal submatrix of
$M(\beta_{11})$ that includes $X_{12}$ is a concave
quadratic function of $X_{12}$.

So we need $M(\beta_{11}) \succeq 0$ at both $X_{12} = l$ and
$X_{12} = u$, i.e.,
\[
    \begin{pmatrix} Y_{12} & x_1 - \alpha_1 & x_2 - \alpha_2 \\
        x_1 - \alpha_1 & x_1 - \alpha_1 & l \\
        x_2 - \alpha_2 & l & x_2 - \alpha_2
    \end{pmatrix} \succeq 0\quad
\mbox{and}\quad
    \begin{pmatrix} Y_{12} & x_1 - \alpha_1 & x_2 - \alpha_2 \\
        x_1 - \alpha_1 & x_1 - \alpha_1 & u \\
        x_2 - \alpha_2 & u & x_2 - \alpha_2
    \end{pmatrix} \succeq 0.
\]
The two matrices above share several properties necessary for positive
semidefiniteness. Both have nonnegative diagonals, and all $2 \times 2$
principal minors are nonnegative:

\begin{itemize}

\item For each, the $\{1,2\}$ principal minor is nonnegative if
and only if $Y_{12} (x_1 - \alpha_1) - (x_1 - \alpha_1)^2 \ge 0$. This
follows from (\ref{equ:nobeta:lin2}):
\begin{equation} \label{equ:local1}
    Y_{12} \ge (x_1 - \alpha_1) + (x_2 - \alpha_2 - X_{12}) \ge (x_1 - \alpha_1) + 0
    = x_1 - \alpha_1,
\end{equation}
which implies $Y_{12} (x_1 - \alpha_1) \ge (x_1 - \alpha_1)^2$.

\item For each, the $\{1,3\}$ principal minor is similarly
nonnegative.

\item The respective $\{2,3\}$ minors are nonnegative if $(x_1 -
\alpha)(x_2 - \alpha_2) - l^2 \ge 0$ and $(x_1 - \alpha_1)(x_2 -
\alpha_2) - u^2 \ge 0$, which hold because $0 \le l \le u \le x_1 -
\alpha_1$ and $0 \le l \le u \le x_2 - \alpha_2$.

\end{itemize}

\noindent It remains to show that the both determinants  of both matrices are nonnegative.
Let us first examine the case for $X_{12}=l$, which itself breaks into two
subcases: (i) $x_1 - \alpha_1 + x_2 - \alpha_2 - Y_{12} \le 0 = l$; (ii)
$0 \le x_1 - \alpha_1 + x_2 - \alpha_2 - Y_{12} = l$. For subcase (i),
the determinant equals $(x_1 - \alpha_1)(x_2 - \alpha_2)(Y_{12} - x_1 +
\alpha_1 - x_2 + \alpha_2)$, which is the product of three nonnegative
terms. For subcase (ii), the determinant equals
\[
    (Y_{12} - x_2 + \alpha_2)(Y_{12} - x_1 + \alpha_1)(x_1 - \alpha_1 + x_2 - \alpha_2 - Y_{12})
\]
which is also the product of three nonnegative terms; in particular, see
(\ref{equ:local1}).
The case for $X_{12}=u$ similarly breaks down into two subcases, which mirror
(i) and (ii) above.
\end{proof}

\subsection{\boldmath Reduction to $\alpha_1 = 0$}\label{sub:alpha1=0}

In order to prove that Theorem
\ref{thm:nobeta} holds even without (\ref{equ:4x4first}) and
(\ref{equ:4x4second}), we will first reduce to the case $\alpha_1
= 0$. In fact if \eqref{equ:nobeta:lin1}--\eqref{equ:nobeta:lin4} and
\eqref{equ:5x5} hold, then
at most one of (\ref{equ:4x4first}) and (\ref{equ:4x4second}) can be
violated. This is because, if both were violated, then we would have
$X_{11} - \alpha_1^2 / (y_1 - Y_{12}) > x_1 - \alpha_1$ and $X_{22} -
\alpha_2^2 / (y_2 - Y_{12}) > x_2 - \alpha_2$; otherwise, by comparing
diagonal elements (\ref{equ:5x5}) would not hold. However, these two
strict inequalities then imply that (\ref{equ:3x3}) $\Rightarrow$
(\ref{equ:4x4first})--(\ref{equ:5x5}), which is a contradiction.
So we assume without loss of generality that (\ref{equ:4x4first})
is violated while (\ref{equ:4x4second}) holds, and use the following terminology regarding system
(\ref{equ:nobeta}): we say that a point $(x,X,y,Y_{12},\alpha)$
{\em lacks only} (\ref{equ:4x4first}) when the point satisfies all conditions in
(\ref{equ:nobeta}) except that it violates (\ref{equ:4x4first}).

\begin{lemma} \label{lem:lacks}
Suppose that $(x,X,y,Y_{12},\alpha)$ lacks only (\ref{equ:4x4first}), and suppose $\alpha_1>0$. Then $y_1 - Y_{12} > 0$ and
$(\bar x, \bar X, y, Y_{12}, \bar \alpha)$ lacks only (\ref{equ:4x4first}),
where
\[
    \bar x := {x_1 - \alpha_1 \choose x_2},
    \ \ \ \ \
    \bar X := \begin{pmatrix} X_{11} - \alpha_1^2 / (y_1 - Y_{12}) & X_{12} \\
    X_{12} & X_{22} \end{pmatrix},
    \ \ \ \ \
    \bar\alpha := {0 \choose \alpha_2}.
\]
\end{lemma}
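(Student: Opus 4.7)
The first claim, $y_1 - Y_{12} > 0$, is immediate: from $\alpha_1 > 0$ and the constraint $\alpha_1 \le y_1 - Y_{12}$ in \eqref{equ:nobeta:lin3} we get $y_1 - Y_{12} \ge \alpha_1 > 0$. For the second claim, the plan is to verify that the transformed point $(\bar x, \bar X, y, Y_{12}, \bar\alpha)$ satisfies every condition in \eqref{equ:nobeta} except \eqref{equ:4x4first} and still violates \eqref{equ:4x4first}.

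The PSD conditions \eqref{equ:3x3}, \eqref{equ:4x4second}, \eqref{equ:5x5} and the violation of \eqref{equ:4x4first} all follow from a single observation: each matrix $M(\beta_{pq})$ is literally unchanged by the transformation. Indeed $\bar x - \bar\alpha = x - \alpha$ and $Y_{12}$ is unchanged, so the top row and left column of every such $M$ are preserved. In the lower-right $2 \times 2$ block, the second coordinate is untouched since $\bar x_2 = x_2$, $\bar X_{22} = X_{22}$, $\bar\alpha_2 = \alpha_2$. The first coordinate also matches under either choice of form: the quadratic form uses $\bar\alpha_1^2/(y_1-Y_{12}) = 0$ and combined with $\bar X_{11} = X_{11} - \alpha_1^2/(y_1-Y_{12})$ gives back $X_{11} - \alpha_1^2/(y_1-Y_{12})$, while the linear form gives $\bar X_{11} - (\bar X_{11} - \bar x_1 + 0) = x_1 - \alpha_1$. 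Both are the original $(1,1)$ entries of $X - \Diag(\beta_{pq})$, so each $M(\beta_{pq})$ is invariant and the PSD/violation status of each of \eqref{equ:3x3}--\eqref{equ:5x5} is preserved.

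The linear conditions \eqref{equ:nobeta:lin2}--\eqref{equ:nobeta:lin4} are also preserved, again because $\bar x - \bar\alpha = x - \alpha$, because $\bar\alpha_1 = 0 \in [0, y_1-Y_{12}]$ by the first claim, $\bar\alpha_2 = \alpha_2 \in [0, y_2-Y_{12}]$, and $(y, Y_{12})$ is unchanged. Within \eqref{equ:nobeta:lin1}, the nonnegativities $\bar x_1 \ge 0$ and $\bar X_{11} \ge 0$ follow respectively from $\bar x_1 = x_1 - \alpha_1 \ge X_{12} \ge 0$ via \eqref{equ:nobeta:lin2} and from the $\{1,4\}$ principal minor of \eqref{equ:5x5'}; the upper bounds $\bar X_{22} = X_{22} \le x_2 = \bar x_2 \le y_2$ and $\bar x_1 \le x_1 \le y_1$ are trivial. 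The only inequality that is not immediate is $\bar X_{11} \le \bar x_1$, and this is the main obstacle.

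For that inequality I would compare \eqref{equ:3x3} with the $3 \times 3$ Schur complement of \eqref{equ:4x4first'} taken with respect to the $(1,1)$ entry $y_1 - Y_{12} > 0$. A direct computation shows this Schur complement agrees entry-by-entry with the matrix in \eqref{equ:3x3} except in position $(2,2)$, where it has $\bar X_{11}$ rather than $\bar x_1$; so the two matrices differ by precisely the rank-one symmetric matrix $(\bar X_{11} - \bar x_1)\, e_2 e_2^T$. By Lemma \ref{pro:3x3redundant}, \eqref{equ:3x3} holds. If one had $\bar X_{11} \ge \bar x_1$, this rank-one difference would be PSD, and the Schur complement would be the sum of two PSD matrices and hence PSD, forcing \eqref{equ:4x4first} to hold---contradicting the hypothesis that \eqref{equ:4x4first} is violated. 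Therefore $\bar X_{11} < \bar x_1$, completing the verification.
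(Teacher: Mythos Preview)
Your proof is correct and follows essentially the same approach as the paper. Your observation that all four matrices $M(\beta_{pq})$ are literally invariant under the transformation $v\mapsto\bar v$ is a clean unification of what the paper verifies case by case (including the Schur-complement argument for \eqref{equ:5x5}), and your contradiction argument for $\bar X_{11}\le\bar x_1$---comparing the $3\times 3$ matrix of \eqref{equ:3x3} with the Schur complement of \eqref{equ:4x4first'} and noting they differ only in the $(2,2)$ entry---is exactly the paper's ``comparing diagonal elements'' step.
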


\begin{proof}
If $\alpha_1>0$ then \eqref{equ:5x5} implies that $y_1-Y_{12}>0$.
For notational convenience, define $v:=
(x,X,y,Y_{12},\alpha)$ and $\bar v := (\bar x, \bar X, y, Y_{12}, \bar
\alpha)$. We need to check that $\bar v$ satisfies all conditions
in (\ref{equ:nobeta}) except (\ref{equ:4x4first}). Since only $\bar
x_1$, $\bar X_{11}$, and $\bar\alpha_1$ differ between $v$ and $\bar
v$, and since $\bar x_1 - \bar\alpha_1 = x_1 - \alpha_1$, we need to
verify $\bar X_{11} \le \bar x_1 \le y_1$, $0 \le \bar\alpha_1 \le
y_1 - Y_{12}$, and (\ref{equ:5x5}) at $\bar v$, and we need to show
(\ref{equ:4x4first}) does {\em not\/} hold at $\bar v$. Clearly $0 \le
\bar\alpha_1 \le y_1 - Y_{12}$ because $\bar\alpha_1 = 0$, and $\bar
x_1 \le x_1 \le y_1$.

With $\bar\alpha_1 = 0$ and $\bar x_1 = x_1 - \alpha_1$, conditions
(\ref{equ:3x3}) and (\ref{equ:4x4first}) at $\bar v$ are respectively
equivalent to
\[
    \begin{pmatrix}
        Y_{12} & \bar x_1 & x_2 - \alpha_2 \\
        \bar x_1 & \bar x_1 & X_{12} \\
        x_2 - \alpha_2 & X_{12} & x_2 - \alpha_2
    \end{pmatrix}
    =
    \begin{pmatrix}
        Y_{12} & x_1 - \alpha_1 & x_2 - \alpha_2 \\
        x_1 - \alpha_1 & x_1 - \alpha_1 & X_{12} \\
        x_2 - \alpha_2 & X_{12} & x_2 - \alpha_2
    \end{pmatrix}
    \succeq 0,
\]
and
\[
    \begin{pmatrix}
        Y_{12} & \bar x_1 & x_2 - \alpha_2 \\
        \bar x_1 & \bar X_{11} & X_{12} \\
        x_2 - \alpha_2 & X_{12} & x_2 - \alpha_2
    \end{pmatrix}
    =
    \begin{pmatrix}
        Y_{12} & x_1 - \alpha_1 & x_2 - \alpha_2 \\
        x_1 - \alpha_1 & X_{11} - \alpha_1^2 / (y_1 - Y_{12}) & X_{12} \\
        x_2 - \alpha_2 & X_{12} & x_2 - \alpha_2
    \end{pmatrix}
    \succeq 0.
\]

These conditions both match the conditions of (\ref{equ:3x3}) and
(\ref{equ:4x4first}) at $v$, showing  that (\ref{equ:3x3})
holds at $v$ if and only if (\ref{equ:3x3}) holds at $\bar v$, and
similarly for (\ref{equ:4x4first}). In particular, this implies $\bar
v$ does not satisfy (\ref{equ:4x4first}), as desired. In addition, we
conclude $\bar X_{11} \le \bar x_1$ because, if $\bar X_{11}$ were
greater than $\bar x_1$, then (\ref{equ:3x3}) holding at $v$ would
imply (\ref{equ:4x4first}) holds at $v$ by just comparing the diagonal
elements above, but this would violate our assumptions.

Finally, using again the
relationship between $\bar v$ and $v$, (\ref{equ:5x5})
holds at $\bar v$ if and only if
\[
\begin{pmatrix}
    y_2 - Y_{12} & 0 & 0 & \alpha_2 \\
    0 & Y_{12} & x_1 - \alpha_1 & x_2 - \alpha_2 \\
    0 & x_1 - \alpha_1 & X_{11} - \alpha_1^2 / (y_1 - Y_{12}) & X_{12} \\
    \alpha_2 & x_2 - \alpha_2 & X_{12} & X_{22}
\end{pmatrix} \succeq 0,
\]
which is true by applying the Schur complement, using the fact that (\ref{equ:5x5}) holds at $v$.
\end{proof}

\subsection{\boldmath Characterizing (\ref{equ:4x4first}) and (\ref{equ:5x5}) in
terms of $\alpha_2$}

Given $(x,X,y,Y_{12},\alpha)$ with $\alpha_1=0$ that lacks only
\eqref{equ:4x4first}, in Section \ref{sub:adjust} our goal will be to
modify $\alpha_2$ to a new value $\hat\alpha_2$ so as to satisfy all
the constraints of \eqref{equ:nobeta}. To facilitate this analysis,
we now carefully examine how conditions (\ref{equ:4x4first}) and
(\ref{equ:5x5}) depend on $\alpha_2$.

Because $y_1-Y_{12} \ge 0$ and $\alpha_1=0$, (\ref{equ:4x4first}) is
equivalent to
\begin{equation} \label{equ:4f}
V := \begin{pmatrix}
Y_{12} & x_1 &x_2-\alpha_2 \cr
x_1& X_{11} & X_{12} \cr
x_2-\alpha_2& X_{12}& x_2-\alpha_2
\end{pmatrix} \gesem 0.
\end{equation}
Now letting $\xbar_2:=x_2-\alpha_2$, we have
$
    \det(V) =
    -X_{11}\xbar_2^2 + (2X_{12}x_1 + Y_{12} X_{11}-x_1^2)\xbar_2 - Y_{12} X_{12}^2.
$
As a function of $\bar x_2$, this is a strictly concave quadratic assuming that
$X_{11} > 0$. Moreover, the discriminant for this quadratic is
\begin{eqnarray*}
& &(Y_{12} X_{11} - x_1^2 +2x_1X_{12})^2-4Y_{12} X_{11}X_{12}^2\\
&=& (Y_{12} X_{11} - x_1^2)^2 + 4x_1X_{12}(Y_{12}X_{11}-x_1^2)+4x_1^2X_{12}^2-4Y_{12} X_{11}X_{12}^2\\
&=&  (Y_{12} X_{11} - x_1^2)^2 +4x_1^2X_{12}(X_{12}-x_1) + 4Y_{12} X_{11}X_{12}(x_1-X_{12})\\
&=&(Y_{12}X_{11}-x_1^2)^2+ 4X_{12}(x_1-X_{12})(Y_{12}X_{11}-x_1^2)\\
&=&\theta(\theta+4X_{12}(x_1-X_{12})),
\end{eqnarray*}
where $\theta:=Y_{12}X_{11}-x_1^2\ge 0$. It follows that $\det(V) \ge 0$
if and only if $\xbar_2$ is contained in the interval bounded by the roots
\[
\frac{X_{12}x_1}{X_{11}} + \frac{\theta \pm \sqrt{\theta(\theta+4X_{12}(x_1-X_{12}))}}{2X_{11}},
\]
or equivalently, if and only if $\alpha_2 \in [\alpha_2^-,\alpha_2^+]$, where
\begin{subequations} \label{equ:alpha+-def}
\begin{eqnarray}
\alpha_2^-&:=& x_2-\frac{X_{12}x_1}{X_{11}} - \frac{\theta + \sqrt{\theta(\theta+4X_{12}(x_1-X_{12}))}}{2X_{11}}\ \le\  x_2-\frac{X_{12}x_1}{X_{11}}
-\frac{\theta}{X_{11}}\label{eq:alpha2-}\\
\alpha_2^+&:=& x_2-\frac{X_{12}x_1}{X_{11}} - \frac{\theta - \sqrt{\theta(\theta+4X_{12}(x_1-X_{12}))}}{2X_{11}}\ \ge\  x_2-\frac{X_{12}x_1}{X_{11}}. \label{eq:alpha2+}
\end{eqnarray}
\end{subequations}
\leaveout{The inequalities in (\ref{eq:alpha2-}) and (\ref{eq:alpha2+}) follow,
respectively, from the inequalities
\[
    -\frac{ \sqrt{\theta(\theta + 4 X_{12} (x_1 - X_{12}))} }{ 2 X_{11} } \le 0
    \ \ \ \ \ \text{and} \ \ \ \ \
    - \frac{\theta - \sqrt{\theta(\theta+4X_{12}(x_1-X_{12}))}}{2X_{11}} \ge 0.
\]}

From the above, if $(x,X,y,Y_{12},\alpha)$ lacks only (\ref{equ:4x4first}), where
$\alpha_1=0$ and $X_{11} > 0$ then to have $(x,X,y,Y_{12},\alphahat)$ satisfy (\ref{equ:4x4first})
with $\alphahat_1=0$ we certainly require that $\alphahat_2\in [\alpha_2^-,\alpha_2^+]$.  In the next lemma
we show that in fact this condition is necessary and sufficient.

\begin{lemma} \label{lem:alphabounds}
Suppose $(x,X,y,Y_{12},\alpha)$ lacks only (\ref{equ:4x4first}), where
$\alpha_1=0$, and let $\alphahat := (0,\alphahat_2)$. Then $X_{11}>0$, $y_2-Y_{12}>0$, and $(x,X,y,Y_{12},\alphahat)$ satisfies (\ref{equ:4x4first}) if
and only if $\alphahat_2\in [\alpha_2^-,\alpha_2^+]$.
\end{lemma}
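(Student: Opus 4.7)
The plan is to prove the three assertions of the lemma in sequence. The two positivity claims $X_{11}>0$ and $y_2-Y_{12}>0$ I would handle by contradiction, extracting information from (\ref{equ:5x5}) together with the linear constraints (\ref{equ:nobeta:lin1})--(\ref{equ:nobeta:lin4}). The ``if and only if'' characterization I would prove by a direct principal-minor analysis of the matrix $V$ in (\ref{equ:4f}) on the interval $[\alpha_2^-,\alpha_2^+]$.

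For $X_{11}>0$, suppose $X_{11}=0$. The $\{3,4\}$ principal minor of (\ref{equ:5x5'}) is $\bigl(\begin{smallmatrix}Y_{12}&x_1\\ x_1&X_{11}\end{smallmatrix}\bigr)\succeq 0$, which with $X_{11}=0$ forces $x_1=0$; combined with $\alpha_1=0$, the bounds (\ref{equ:nobeta:lin2}) then force $X_{12}=0$. Substituting these values into (\ref{equ:4f}) shows $V$ coincides entry-for-entry with the matrix in (\ref{equ:3x3'}), so $V\succeq 0$ follows from Lemma \ref{pro:3x3redundant}, contradicting ``lacks only (\ref{equ:4x4first})''. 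For $y_2-Y_{12}>0$, suppose equality; then (\ref{equ:nobeta:lin3}) forces $\alpha_2=0$. The $\{3,4,5\}$ principal submatrix of (\ref{equ:5x5'}) is PSD as a sub-block, and at $\alpha_1=\alpha_2=0$ reads $\bigl(\begin{smallmatrix}Y_{12}&x_1&x_2\\ x_1&X_{11}&X_{12}\\ x_2&X_{12}&X_{22}\end{smallmatrix}\bigr)\succeq 0$. Using $X_{22}\le x_2$ from (\ref{equ:nobeta:lin1}), adding the PSD matrix $(x_2-X_{22})e_3e_3^T$ yields $V$ at $\alpha_1=\alpha_2=0$, so (\ref{equ:4x4first}) holds, another contradiction.

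For the iff characterization, the ``only if'' direction is already essentially in the discussion preceding the lemma: $V\succeq 0$ forces $\det V\ge 0$, and the concave-quadratic analysis in $\bar x_2=x_2-\alphahat_2$ (valid since $X_{11}>0$) shows this is equivalent to $\alphahat_2\in[\alpha_2^-,\alpha_2^+]$. For the ``if'' direction I would verify every principal minor of $V$ on the interval. Writing $r_\pm:=x_2-\alpha_2^\mp$ for the roots of $\det V$ in $\bar x_2$: the diagonal entries of $V$ are nonneg because $Y_{12}\ge 0$ from $\RLT_y$, $X_{11}>0$ from above, and $x_2-\alphahat_2\ge r_-\ge 0$ (since the product $r_-r_+=Y_{12}X_{12}^2/X_{11}$ and the sum $r_-+r_+=(2X_{12}x_1+\theta)/X_{11}$ of roots are both nonneg, and $r_+\ge 0$ by (\ref{eq:alpha2+})). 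The $\{1,2\}$ minor equals the constant $\theta:=Y_{12}X_{11}-x_1^2$, nonneg by the $\{3,4\}$ minor of (\ref{equ:5x5'}). The $\{1,3\}$ minor $(x_2-\alphahat_2)(Y_{12}-(x_2-\alphahat_2))$, being concave quadratic in $\bar x_2$, is nonneg on $[r_-,r_+]\subseteq[0,Y_{12}]$ provided $r_+\le Y_{12}$; and the $\{2,3\}$ minor $X_{11}(x_2-\alphahat_2)-X_{12}^2$, linear in $\bar x_2$, is nonneg on the interval provided $r_-\ge X_{12}^2/X_{11}$. Finally $\det V\ge 0$ by construction of $\alpha_2^\pm$. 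Sylvester's criterion then yields $V\succeq 0$.

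The main obstacle will be verifying the two algebraic inequalities $r_+\le Y_{12}$ and $r_-\ge X_{12}^2/X_{11}$, since $\det V\ge 0$ alone does not give PSD. After isolating and squaring the surd $\sqrt{\theta(\theta+4X_{12}(x_1-X_{12}))}$, each inequality reduces to an evidently nonneg identity of the form $(x_1-X_{12})^2\cdot(\text{nonneg})\ge 0$: specifically $4(x_1-X_{12})^2 Y_{12}X_{11}\ge 0$ for $r_+\le Y_{12}$ and $4X_{12}^2(x_1-X_{12})^2\ge 0$ for $r_-\ge X_{12}^2/X_{11}$, both valid by $0\le X_{12}\le x_1$ from $\RLT_x$ together with the previously established $X_{11}>0$ and $Y_{12}\ge 0$.
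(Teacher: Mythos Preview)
Your proposal is correct, and the two positivity claims $X_{11}>0$ and $y_2-Y_{12}>0$ are handled essentially as in the paper. The ``if and only if'' part, however, takes a genuinely different route. The paper observes that the $\{1,2\}$ principal submatrix of $V$ (namely $\bigl(\begin{smallmatrix}Y_{12}&x_1\\ x_1&X_{11}\end{smallmatrix}\bigr)$) is PSD and independent of $\hat\alpha_2$; Cauchy's eigenvalue interlacing theorem then forces $V$ to have at most one negative eigenvalue, so when $\theta>0$ the strict inequality $\det V>0$ on $(\alpha_2^-,\alpha_2^+)$ immediately gives $V\succ 0$, with the endpoints handled by continuity, and only the degenerate case $\theta=0$ requires a direct $2\times 2$ minor check. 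You instead bypass interlacing entirely and verify \emph{all} principal minors of $V$ on the whole interval, reducing the two nontrivial $2\times 2$ minors to the bracketing $X_{12}^2/X_{11}\le r_-\le r_+\le Y_{12}$, which you establish by squaring the surd and obtaining the identities $4X_{12}^2(x_1-X_{12})^2\ge 0$ and $4(x_1-X_{12})^2Y_{12}X_{11}\ge 0$.

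Your argument is more elementary and entirely self-contained (no appeal to Horn--Johnson), at the price of the extra algebra; it also yields the incidental but pleasant fact that the root interval $[r_-,r_+]$ sits inside $[X_{12}^2/X_{11},\,Y_{12}]$. One small slip: the appeal to (\ref{eq:alpha2+}) for $r_+\ge 0$ points to the wrong formula (that inequality bounds $r_-$ from above, not $r_+$ from below); but since you already have product and sum of roots nonnegative by Vieta, $r_-\ge 0$ follows directly and the citation is unnecessary.
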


\begin{proof}
Note that if
$(x,X,y,Y_{12},\alpha)$ with $\alpha_1=0$ satisfies \eqref{equ:5x5}, then
$X_{11}=0$ implies that $x_1=X_{12}=0$.  In this case \eqref{equ:4x4first} follows immediately from \eqref{equ:nobeta:lin2}.
 In addition,
if $y_2-Y_{12}=0$ then \eqref{equ:5x5} implies that $\alpha_2=0$, in which case \eqref{equ:4x4first} would follow immediately from
$X_{22}\le x_2$.  Thus if $(x,X,y,Y_{12},\alpha)$ with $\alpha_1=0$ lacks only \eqref{equ:4x4first} we must have $X_{11}>0$ and $y_2-Y_{12}>0$.

We consider $V$ defined in (\ref{equ:4f}) with $\alphahat_2$
substituted for $\alpha_2$; we wish to show $V \succeq 0$ if and only
if $\alphahat_2\in [\alpha_2^-,\alpha_2^+]$. As discussed before the
lemma, $\det(V) \ge 0$ for such $\hat\alpha_2$, but it could happen
that $V \not\succeq 0$ even when $\det(V)\ge 0$. Note that, since
$(x,X,y,Y_{12},\alpha)$ satisfies (\ref{equ:5x5}) by assumption, then by
the eigenvalue interlacing theorem (see, for example, Theorem 4.3.8 of
Horn and Johnson \cite{Horn.Johnson.1985}), $V$ has at most one negative
eigenvalue.

We consider two cases based on whether $\theta \ge 0$ is positive
or zero. If $\theta>0$, then by the determinant and discriminant formulas above
we have $\det(V) > 0 \Rightarrow V \succ 0$ for $\alphahat_2\in
(\alpha_2^-,\alpha_2^+)$, and $V \succeq 0$ with $\det(V) = 0$ when
$\alphahat_2=\alpha_2^-$ or $\alphahat_2=\alpha_2^+$. The latter
follows, for example, by continuity of the determinants of all principal
submatrices. On the other hand, if $\theta=0$, then: $\alpha_2^- = \alpha_2^+
= x_2-X_{12}x_1/X_{11}$; $\det(V) = 0$ when $\hat\alpha_2=
x_2-X_{12}x_1/X_{11}$; and $\det(V) < 0$ for any other value of
$\alphahat_2$. Focusing then on $\alphahat_2=x_2-X_{12}x_1/X_{11}$,
we have
\[
V = \begin{pmatrix}
Y_{12} & x_1 & X_{12}x_1/X_{11} \\
x_1 & X_{11} & X_{12} \\
X_{12}x_1/X_{11} & X_{12} & X_{12}x_1/X_{11}
\end{pmatrix}.
\]
In this case $\diag(V) \ge 0$ and $\det(V) = 0$, so to demonstrate $V
\succeq 0$, we need to show that the $2\times 2$ principal submatrices are
positive semidefinite or equivalently have nonnegative determinants.
The $\set{1,2}$ submatrix is positive semidefinite since (\ref{equ:5x5})
is satisfied; the determinant of the $\set{1,3}$ submatrix is
nonnegative because $Y_{12}X_{11} \ge x_1^2 \ge X_{12}x_1$; and the
determinant of the $\set{2,3}$ submatrix is nonnegative because $x_1\ge
X_{12}$.
\end{proof}

It will also be important that we understand how (\ref{equ:5x5}) depends
on $\alpha_2$. When $\alpha_1 = 0$ and
$(x,X,y,Y_{12},\alpha)$ satisfies (\ref{equ:5x5}), we certainly have
\begin{equation}\label{equ:2gdet}
\left| \begin{matrix}
y_2-Y_{12} & 0 & 0 & \alpha_2 \\
0 & Y_{12} & x_1 &x_2-\alpha_2 \\
0 & x_1& X_{11} & X_{12} \\
\alpha_2 & x_2-\alpha_2& X_{12}& X_{22}
\end{matrix} \right| \ge 0.
\end{equation}
Assuming that  $X_{11}>0$ and $y_2 - Y_{12} > 0$, the left side of \eqref{equ:2gdet} is a strictly
concave quadratic function of $\alpha_2$, and it is straightforward to
compute that the maximizer of this determinant is
\begin{equation}\label{eq:alpha2*}
    \alpha_2^* := \frac{(y_2-Y_{12})(x_2 X_{11} - x_1 X_{12})}{y_2X_{11} - x_1^2}
    = \left(x_2-\frac {X_{12}x_1}{X_{11}}\right) \frac{y_2-Y_{12}}{y_2-x_1^2/X_{11}}
    \le x_2-\frac {X_{12}x_1}{X_{11}}.
\end{equation}
In \eqref{eq:alpha2*} the denominator $y_2X_{11} - x_1^2$ is strictly
positive since $Y_{12}X_{11} \ge x_1^2$ and $y_2 > Y_{12}$, and then the
inequality follows from the fact that $Y_{12}X_{11} \ge x_1^2$.

Finally, for $\alpha_1=0$ the lemma below considers conditions under which
(\ref{equ:4x4first}) $\Rightarrow$ (\ref{equ:5x5}), and
(\ref{equ:5x5}) $\Rightarrow$ (\ref{equ:4x4first}).

\begin{lemma} \label{lem:psdimpliespsd}
Let $(x,X,y,Y_{12},\alpha)$ be given with $\alpha_1 = 0$, $y_2-Y_{12}>0$ and $0 \le x_2
- X_{22} \le \tfrac14 (y_2 - Y_{12})$. Define $\rho := \sqrt{1 - 4 (x_2
- X_{22}) / (y_2 - Y_{12})} \le 1$. Also define
$$\lam^- := \tfrac12(1 - \rho) (y_2 - Y_{12}) \le \tfrac12(1 + \rho)
(y_2 - Y_{12}) =: \lam^+.$$ Then $\lam^- \le \alpha_2 \le \lam^+$ ensures
(\ref{equ:4x4first}) $\Rightarrow$ (\ref{equ:5x5}), and $\alpha_2 \le
\lam^-$ or $\lam^+ \le \alpha_2$ ensures (\ref{equ:5x5}) $\Rightarrow$
(\ref{equ:4x4first}).
\end{lemma}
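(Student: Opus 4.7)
The plan is to use Schur complements to reduce both PSD conditions to a comparison between two matrices that differ in only one entry, then analyze the sign of that single entry as a quadratic in $\alpha_2$.

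First I would exploit $\alpha_1=0$ to simplify. Inspecting \eqref{equ:4x4first'}, when $\alpha_1 = 0$ the first row and column of the matrix are $(y_1-Y_{12},0,0,0)^T$, so \eqref{equ:4x4first} is equivalent (using $y_1-Y_{12}\ge 0$) to $V \succeq 0$, where $V$ is the $3\times 3$ matrix from \eqref{equ:4f}. Similarly, inspecting \eqref{equ:5x5'}, when $\alpha_1=0$ the first row and column become $(y_1-Y_{12},0,0,0,0)^T$, so \eqref{equ:5x5} is equivalent to positive semidefiniteness of the trailing $4\times 4$ block, which is precisely the matrix $W$ from \eqref{equ:2gdet}. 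Since $y_2-Y_{12}>0$, I would take the Schur complement of the $(1,1)$ entry $y_2-Y_{12}$ in $W$ and conclude that \eqref{equ:5x5} is equivalent to $\tilde V \succeq 0$, where
\[
\tilde V := \begin{pmatrix}
Y_{12} & x_1 & x_2-\alpha_2 \\
x_1 & X_{11} & X_{12} \\
x_2-\alpha_2 & X_{12} & X_{22} - \alpha_2^2/(y_2-Y_{12})
\end{pmatrix}.
\]

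Now $V$ and $\tilde V$ differ only in the $(3,3)$ entry, and
\[
V - \tilde V = \Delta(\alpha_2)\, e_3 e_3^T, \qquad
\Delta(\alpha_2) := (x_2-\alpha_2) - X_{22} + \frac{\alpha_2^2}{y_2-Y_{12}}.
\]
Hence $\Delta(\alpha_2) \le 0$ gives $\tilde V \succeq V$, so $V \succeq 0 \Rightarrow \tilde V \succeq 0$, i.e., \eqref{equ:4x4first} $\Rightarrow$ \eqref{equ:5x5}, while $\Delta(\alpha_2) \ge 0$ gives the reverse implication. It remains to locate the sign of $\Delta$. Multiplying by $y_2-Y_{12}>0$ yields the quadratic $\alpha_2^2 - (y_2-Y_{12})\alpha_2 + (x_2-X_{22})(y_2-Y_{12})$, whose discriminant is $(y_2-Y_{12})^2 \bigl(1 - 4(x_2-X_{22})/(y_2-Y_{12})\bigr) = (y_2-Y_{12})^2 \rho^2 \ge 0$ by the hypothesis $0 \le x_2-X_{22}\le \tfrac14(y_2-Y_{12})$. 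The roots of this quadratic are exactly $\lam^-$ and $\lam^+$, so $\Delta(\alpha_2)\le 0$ on $[\lam^-,\lam^+]$ and $\Delta(\alpha_2)\ge 0$ outside this interval, yielding the two claimed implications.

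The only mild obstacle is bookkeeping: one must verify that the Schur complement step is truly an equivalence (it is, because $y_2-Y_{12}>0$), and one must carry the factor $y_2-Y_{12}$ through the discriminant calculation carefully to see that $\rho$ as defined in the lemma is precisely the scaled discriminant. Everything else is direct algebra.
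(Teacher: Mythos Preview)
Your proof is correct and follows essentially the same approach as the paper: reduce both PSD conditions (via $\alpha_1=0$ and Schur complement on $y_2-Y_{12}>0$) to $3\times 3$ matrices differing only in the $(3,3)$ entry, then observe that the sign of that difference is governed by a quadratic in $\alpha_2$ with roots $\lam^-,\lam^+$. The paper states this more tersely (``comparing diagonal elements'') while you spell out the matrices $V,\tilde V$ and $\Delta(\alpha_2)$ explicitly, but the argument is the same.
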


\begin{proof}
By exploiting $\alpha_1 = 0$, using the Schur complement theorem, and
comparing diagonal elements, we see that: (i) (\ref{equ:4x4first})
$\Rightarrow$ (\ref{equ:5x5}) is ensured when $x_2 - \alpha_2 \le X_{22}
- \alpha_2^2 / (y_2 - Y_{12})$; and (ii) (\ref{equ:5x5}) $\Rightarrow$
(\ref{equ:4x4first}) is ensured when the reverse inequality $x_2 - \alpha_2 \ge X_{22} -
\alpha_2^2 / (y_2 - Y_{12})$ holds. Note that $\lam^-$ and $\lam^+$ are the
roots of the quadratic equation $x_2 - \alpha_2 = X_{22} - \alpha_2^2 /
(y_2 - Y_{12})$ in $\alpha_2$. In particular, the assumption $0 \le x_2
- X_{22} \le \tfrac14 (y_2 - Y_{12})$ guarantees that the discriminant
is nonnegative and that $x_2 - \alpha_2 \le X_{22} - \alpha_2^2 / (y_2 -
Y_{12})$ is satisfied at the midpoint $\tfrac12(y_2 - Y_{12})$ of
$\lam^-$ and $\lam^+$. Then the final statement of the lemma is just the
restatement of (i) and (ii).
\end{proof}

\subsection{\boldmath Adjusting $\alpha_2$ when $\alpha_1 = 0$} \label{sub:adjust}

Assume that $(x,X,y,Y_{12},\alpha)$ lacks only (\ref{equ:4x4first}) with
$\alpha_1=0$. Then by Lemma \ref{lem:alphabounds} either $\alpha_2 <
\alpha_2^-$ or $\alpha_2>\alpha_2^+$; see (\ref{equ:alpha+-def}) for the
definitions of $\alpha_2^-$ and $\alpha_2^+$. The next two lemmas
show that $(x,X,y,Y_{12},\alphahat)$ then satisfies (\ref{equ:nobeta}), where
in the first case $\alphahat = (0,\alpha_2^-)$ and in the second case
$\alphahat = (0,\alpha_2^+)$.

\begin{lemma}\label{lem:increase_alpha2}
Assume that $(x,X,y,Y_{12},\alpha)$ lacks only (\ref{equ:4x4first}) with
$\alpha_1=0$, and $\alpha_2 <\alpha_2^-$. Then
$(x,X,y,Y_{12},\alphahat)$ satisfies (\ref{equ:nobeta}) with $\alphahat
= (0,\alpha_2^-)$.
\end{lemma}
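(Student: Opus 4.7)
The plan is to set $\hat\alpha := (0, \alpha_2^-)$ and verify every constraint of \eqref{equ:nobeta} at the point $(x, X, y, Y_{12}, \hat\alpha)$. Since only the coordinate $\alpha_2$ changes, the conditions $\diag(X) \le x \le y$ and $(y, Y_{12}) \in \RLT_y$ carry over from the original point; $\hat\alpha_2 \ge 0$ is immediate from $\alpha_2^- > \alpha_2 \ge 0$; and the bound $\hat\alpha_2 \le y_2 - Y_{12}$, together with $X_{12} \le x_2 - \hat\alpha_2$, will follow from the estimate $\alpha_2^- \le x_2 - X_{12} x_1/X_{11} - \theta/X_{11}$ in \eqref{eq:alpha2-} combined with the inequality $X_{11} \le x_1$ from the original linear data; the lower bound on $X_{12}$ is only loosened by the increase in $\alpha_2$. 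Condition \eqref{equ:3x3} at $\hat\alpha$ then holds by Lemma \ref{pro:3x3redundant}, and condition \eqref{equ:4x4first} at $\hat\alpha$ holds by Lemma \ref{lem:alphabounds}, since $\hat\alpha_2 = \alpha_2^- \in [\alpha_2^-, \alpha_2^+]$.

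The main task will be to verify \eqref{equ:5x5} at $\hat\alpha$. With $\hat\alpha_1 = 0$ and $y_1 - Y_{12} \ge 0$, the first row and column of \eqref{equ:5x5'} decouple, and the condition reduces to $W(\alpha_2^-) \succeq 0$, where $W(\alpha_2)$ denotes the $4 \times 4$ submatrix of \eqref{equ:5x5'} indexed by rows $2,\dots,5$. Schur complementing on the entry $y_2 - Y_{12} > 0$ (positive by Lemma \ref{lem:alphabounds}) turns this into
\[
    V(\alpha_2^-) - \frac{f(\alpha_2^-)}{y_2 - Y_{12}}\, e_3 e_3^T \succeq 0,
\]
where $V$ is the matrix in \eqref{equ:4f} and $f(\alpha) := \alpha^2 - (y_2 - Y_{12}) \alpha + (y_2 - Y_{12})(x_2 - X_{22})$. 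Since $V(\alpha_2^-) \succeq 0$ by Lemma \ref{lem:alphabounds}, it will suffice to show $f(\alpha_2^-) \le 0$: the subtracted rank-one term is then negative semidefinite and PSDness is preserved. To this end, I will use the identity $\det W(\alpha_2) = (y_2 - Y_{12}) \det V(\alpha_2) - \theta f(\alpha_2)$, obtained by cofactor expansion along the first column, with $\theta := Y_{12} X_{11} - x_1^2 \ge 0$. Because $W(\alpha_2) \succeq 0$ at the original $\alpha_2 < \alpha_2^-$ while $\det V(\alpha_2) < 0$ there, one first deduces $\theta > 0$ and $f(\alpha_2) < 0$. Since $\det W$ is then strictly concave in $\alpha_2$, I will argue, by analyzing the relative positions of the PSD interval $[\alpha_2^-, \alpha_2^+]$ for $V$ and the interval $[\lambda^-, \lambda^+]$ on which $f \le 0$, together with the assumption that $W \succeq 0$ at some $\alpha_2 < \alpha_2^-$, that $\alpha_2^-$ lies in the PSD interval of $W$, yielding $f(\alpha_2^-) \le 0$.

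Finally, condition \eqref{equ:4x4second} at $\hat\alpha$ will follow from \eqref{equ:5x5} at $\hat\alpha$: with $\hat\alpha_1 = 0$, the matrix in \eqref{equ:4x4second'} equals $W(\alpha_2^-) + (x_1 - X_{11}) e_3 e_3^T$, which is PSD as the sum of two PSD matrices (using $x_1 \ge X_{11}$). The main obstacle will be the derivation of $f(\alpha_2^-) \le 0$: although necessary consequences such as $\theta > 0$ and $f(\alpha_2) < 0$ follow readily from \eqref{equ:5x5} at the original $\alpha_2$, extending this to $f(\alpha_2^-) \le 0$ requires a careful argument linking the PSD ranges of the quadratics $\det W$, $\det V$, and $f$ in $\alpha_2$.
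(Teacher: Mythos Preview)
Your plan tracks the paper's proof closely --- reduce to the $4\times 4$ matrix $W(\alpha_2)$, recognise that the Schur complement with respect to $y_2-Y_{12}$ is $V(\alpha_2)-\tfrac{f(\alpha_2)}{y_2-Y_{12}}e_3e_3^T$, and hence that it suffices to show $f(\alpha_2^-)\le 0$, i.e.\ $\alpha_2^-\le\lambda^+$.  The determinant identity $\det W=(y_2-Y_{12})\det V-\theta f$ is a nice device and is equivalent to what the paper uses.  There are, however, two gaps.

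First, the claim that $\hat\alpha_2\le y_2-Y_{12}$ follows from the estimate \eqref{eq:alpha2-} together with $X_{11}\le x_1$ is not correct.  Writing out that estimate gives
\[
\alpha_2^- \ \le\ x_2-\frac{X_{12}x_1}{X_{11}}-\frac{\theta}{X_{11}}
\;=\; x_2 - Y_{12} + \frac{x_1(x_1-X_{12})}{X_{11}},
\]
and the last term is nonnegative, so this bound can exceed $y_2-Y_{12}$.  In the paper this inequality is obtained only \emph{after} proving $\alpha_2^-\le\lambda^+\le y_2-Y_{12}$; it is not a consequence of the linear data alone.

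Second, and more importantly, ``analysing the relative positions of $[\alpha_2^-,\alpha_2^+]$ and $[\lambda^-,\lambda^+]$'' together with $W(\alpha_2)\succeq 0$ at one point $\alpha_2<\alpha_2^-$ is not by itself enough to force $f(\alpha_2^-)\le 0$.  Under the contradiction hypothesis $\lambda^+<\alpha_2^-$ your identity gives $\det W(\lambda^-)<0$, $\det W(\alpha_2)\ge 0$, $\det W(\lambda^+)<0$, $\det W(\alpha_2^-)<0$ (since $\det V(\lambda^\pm)<0$ and $\det V(\alpha_2^-)=0$); this pattern is perfectly consistent with a strictly concave quadratic whose nonnegativity interval sits inside $(\lambda^-,\lambda^+)$, and yields no contradiction.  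The missing ingredient is the location of the maximiser $\alpha_2^*$ of $\det W$ (see \eqref{eq:alpha2*}).  The paper shows, under the hypothesis $\lambda^+<\alpha_2^-$, that $\lambda^+<\alpha_2^*$; since $\alpha_2\le\lambda^+<\alpha_2^*$ and $\det W$ is increasing on $(-\infty,\alpha_2^*]$, one gets $\det W(\lambda^+)\ge\det W(\alpha_2)\ge 0$, which via your identity (at $\lambda^+$, where $f=0$) forces $\det V(\lambda^+)\ge 0$, contradicting $\lambda^+<\alpha_2^-$.  You need this (or an equivalent) step; without it the argument does not close.
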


\begin{proof}
From Lemma \ref{lem:alphabounds} we know that $X_{11}>0$, $y_2-Y_{12}>0$ and
$(x,X,y,Y_{12},\alphahat)$
satisfies (\ref{equ:4x4first}). Since
(\ref{equ:nobeta:lin1})--(\ref{equ:nobeta:lin4}) $\Rightarrow$
(\ref{equ:3x3}) by Proposition \ref{pro:3x3redundant}
and (\ref{equ:5x5}) $\Rightarrow$ (\ref{equ:4x4second})
when $\alpha_1 = 0$ by inspection, we need to establish
just (\ref{equ:nobeta:lin1})--(\ref{equ:nobeta:lin4}) and
(\ref{equ:5x5}). Since $(x,X,y,Y_{12},\alpha)$ satisfies
(\ref{equ:nobeta:lin1})--(\ref{equ:nobeta:lin4}) and we have increased
$\alpha_2$ to $\alpha_2^-$ to form $\alphahat$, we need only show
$\alpha_2^- \le x_2-X_{12}$ and $\alpha_2^- \le y_2-Y_{12}$ to establish that
(\ref{equ:nobeta:lin1})--(\ref{equ:nobeta:lin4}) hold for $(x,X,y,Y_{12},\alphahat)$.
 In fact, we will
show $\alpha_2^- \le x_2 - X_{12}$ as well as the stronger inequality
$\alpha_2^- \le \lam^+$, where $\lam^+ = \tfrac12 (1 + \rho) (y_2 - Y_{12})$
and $0 \le \rho \le 1$ are defined in Lemma \ref{lem:psdimpliespsd}.
Indeed, the conditions of Lemma \ref{lem:psdimpliespsd} hold
here because, as (\ref{equ:5x5}) is satisfied but
(\ref{equ:4x4first}) is violated at $\alpha_2$, we have $x_2 - \alpha_2 \le X_{22}
- \alpha_2^2 / (y_2 - Y_{12})$, which ensures $0 \le x_2 - X_{22} \le
\tfrac14(y_2 - Y_{12})$ and $\alpha_2 \le \lam^+$. Hence, proving
$\alpha_2^- \le \lam^+$ will ensure (\ref{equ:4x4first})
$\Rightarrow$ (\ref{equ:5x5}).

To prove $\alpha_2^- \le x_2 - X_{12}$, we note that \eqref{eq:alpha2-} and $x_1\ge
X_{11}$ imply
\[
\alpha_2^-\le x_2- \frac{X_{12}x_1}{X_{11}} \le x_2-X_{12}.
\]
Next, to prove $\alpha_2^- \le \lam^+$, assume for contradiction that
$\alpha_2\le \lam^+ <\alpha_2^-$. Consider $\alpha_2^*$ as defined
in (\ref{eq:alpha2*}). We claim $\lam^+ < \alpha_2^*$, which from
\eqref{eq:alpha2*} is equivalent to
 \[
     x_2-\frac{X_{12}x_1}{X_{11}} > \tfrac12 (1 + \rho)\left(y_2-\frac{x_1^2}{X_{11}}\right).
 \]
From \eqref{eq:alpha2-}, the definition of $\theta$, and the assumption
that $\lam^+ <\alpha_2^-$, we then have
\begin{align*}
    x_2-\frac{X_{12}x_1}{X_{11}}
    &\ge \alpha_2^- + \frac{\theta}{X_{11}} \\
    &> \tfrac12(1 + \rho)(y_2-Y_{12}) + \left(Y_{12} -\frac{x_1^2}{X_{11}}\right) \\
    &\ge \tfrac12(1 + \rho)(y_2-Y_{12}) + \tfrac12(1 + \rho)\left(Y_{12} -\frac{x_1^2}{X_{11}}\right) \\
    &= \tfrac12(1 + \rho)\left(y_2-\frac{x_1^2}{X_{11}}\right),
\end{align*}
as required. Since \eqref{equ:2gdet} holds at $\alpha_2 \le \lam^+$ and
$\alpha_2^* > \lam^+$, the determinant in \eqref{equ:2gdet} must be strictly positive
at $\lam^+$; recall that this determinant is a strictly concave function of $\alpha_2$.
Then (\ref{equ:5x5}) holds with $\alpha_2$ replaced by
$\lam^+$, since eigenvalue interlacing implies that the matrix in \eqref{equ:5x5} can
have at most one negative eigenvalue as $\alpha_2$ is varied. However Lemma \ref{lem:psdimpliespsd} then implies that (\ref{equ:4x4first}) also then holds with $\alpha_2$ replaced by
 $\lam^+$, and therefore $\alpha_2^- \le \lam^+$ from Lemma \ref{lem:alphabounds}.
This is the desired
contradiction of $\lam^+ < \alpha_2^-$.  We must therefore have $\alpha_2^-
\le \lam^+$, which completes the proof.
 \end{proof}

\begin{lemma}\label{lem:decrease_alpha2}
Assume $(x,X,y,Y_{12},\alpha)$ lacks only (\ref{equ:4x4first}) with
$\alpha_1=0$, and $\alpha_2 > \alpha_2^+$. Then
$(x,X,y,Y_{12},\alphahat)$ satisfies (\ref{equ:nobeta}) with $\alphahat
= (0,\alpha_2^+)$.
\end{lemma}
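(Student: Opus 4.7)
The plan is to mirror the proof of Lemma~\ref{lem:increase_alpha2}, now decreasing $\alpha_2$ to $\alpha_2^+$ rather than increasing it to $\alpha_2^-$. Lemma~\ref{lem:alphabounds} immediately yields $X_{11}>0$, $y_2-Y_{12}>0$, and that (\ref{equ:4x4first}) holds at $(x,X,y,Y_{12},\alphahat)$. Since (\ref{equ:3x3}) is redundant by Proposition~\ref{pro:3x3redundant}, and (\ref{equ:5x5})~$\Rightarrow$~(\ref{equ:4x4second}) when $\alpha_1=0$ by inspection, it remains to verify (\ref{equ:nobeta:lin1})--(\ref{equ:nobeta:lin4}) and (\ref{equ:5x5}) at $\alphahat$.

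Because $\alpha_2^+<\alpha_2$, the upper bounds $\alpha_2^+\le x_2-X_{12}$ and $\alpha_2^+\le y_2-Y_{12}$ in (\ref{equ:nobeta:lin2})--(\ref{equ:nobeta:lin3}) are inherited from the original point, and only the two lower bounds $\alpha_2^+\ge 0$ and $\alpha_2^+\ge x_1+x_2-X_{12}-Y_{12}$ require fresh verification. I would obtain the second by combining $\alpha_2^+\ge x_2-X_{12}x_1/X_{11}$ from (\ref{eq:alpha2+}) with the chain
\[
    X_{11}(Y_{12}-x_1)\ge x_1(x_1-X_{11})\ge X_{12}(x_1-X_{11}),
\]
in which the first inequality is equivalent to $\theta\ge 0$ and the second factors as $(x_1-X_{12})(x_1-X_{11})\ge 0$; rearranging this chain gives $Y_{12}+X_{12}-X_{12}x_1/X_{11}\ge x_1$, which combined with the bound on $\alpha_2^+$ yields $\alpha_2^+\ge x_1+x_2-X_{12}-Y_{12}$.

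The remaining bound $\alpha_2^+\ge 0$ and condition (\ref{equ:5x5}) at $\alphahat$ both reduce to the single claim $\alpha_2^+\ge\lam^-$, where $\lam^\pm$ are from Lemma~\ref{lem:psdimpliespsd}. Indeed, $\lam^-\ge 0$ gives $\alpha_2^+\ge 0$, and the strict violation of (\ref{equ:4x4first}) together with (\ref{equ:5x5}) at the original $\alpha_2$ forces $\alpha_2\in(\lam^-,\lam^+)$, so $\alpha_2^+<\alpha_2<\lam^+$ yields $\alpha_2^+\in[\lam^-,\lam^+]$ and then Lemma~\ref{lem:psdimpliespsd} gives (\ref{equ:4x4first})~$\Rightarrow$~(\ref{equ:5x5}) at $\alphahat$.

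For the claim $\alpha_2^+\ge\lam^-$ I would argue by contradiction, mirroring the proof of $\alpha_2^-\le\lam^+$ in Lemma~\ref{lem:increase_alpha2}. Supposing $\alpha_2^+<\lam^-$, a short computation using $\alpha_2^+\ge x_2-X_{12}x_1/X_{11}$ together with $(y_2-Y_{12})/(y_2-x_1^2/X_{11})\le 1$ should give $\alpha_2^*<\lam^-$ directly. Since $\alpha_2^*$ maximizes the strictly concave determinant in (\ref{equ:2gdet}), which is nonnegative at $\alpha_2$ by (\ref{equ:5x5}), strict concavity then forces this determinant to be strictly positive at the intermediate point $\lam^-\in(\alpha_2^*,\alpha_2)$. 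Cauchy interlacing then forces (\ref{equ:5x5}) at $\lam^-$, and since at $\alpha_2=\lam^-$ the matrix $V$ of (\ref{equ:4f}) coincides with the Schur complement associated to (\ref{equ:5x5}), (\ref{equ:4x4first}) also holds at $\lam^-$; Lemma~\ref{lem:alphabounds} then forces $\lam^-\le\alpha_2^+$, contradicting $\alpha_2^+<\lam^-$. I expect the main obstacle to be locating the correct chain that certifies the new linear bound $\alpha_2^+\ge x_1+x_2-X_{12}-Y_{12}$, since it has no analogue in Lemma~\ref{lem:increase_alpha2}; once that bound is in hand, the remainder of the proof is a direct mirror of the argument for $\alpha_2^-\le\lam^+$.
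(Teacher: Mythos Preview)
Your proposal is correct and follows essentially the same route as the paper's proof: reduce to verifying the two new lower bounds $\alpha_2^+\ge x_1+x_2-X_{12}-Y_{12}$ and $\alpha_2^+\ge\lam^-$, establish the first from $\alpha_2^+\ge x_2-X_{12}x_1/X_{11}$ together with $\theta\ge0$ and $(x_1-X_{12})(x_1-X_{11})\ge0$, and obtain the second by contradiction via $\alpha_2^*<\lam^-$ and the strict positivity of the determinant \eqref{equ:2gdet} at $\lam^-$. Your variants---deducing $\alpha_2^+\ge0$ from $\lam^-\ge0$, and concluding \eqref{equ:4x4first} at $\lam^-$ by noting that $M(\beta_{12})$ and $M(\beta_{22})$ coincide there rather than re-invoking Lemma~\ref{lem:psdimpliespsd}---are cosmetic and work equally well.
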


\begin{proof}
We follow a similar proof as for the preceding lemma. In this case,
however, since we are decreasing $\alpha_2$ to $\alpha_2^+$, we need
to show $\alpha_2^+ \ge x_1+x_2 -X_{12}-Y_{12}$ and $\alpha_2^+ \ge
\lam^-$, where $\lam^- = \tfrac12(1 - \rho)(y_2 - Y_{12})$ as defined
in Lemma \ref{lem:psdimpliespsd}. Note that $\alpha_2 \ge \lam^-$ because
$(x,X,y,Y_{12},\alpha)$ lacks only (\ref{equ:4x4first}), just as in the
preceding lemma.

For the first inequality, from \eqref{eq:alpha2+} it suffices to show
\[
x_2-  \frac{X_{12}x_1}{X_{11}} \ge  x_1+x_2 -X_{12}-Y_{12}
\]
which is equivalent to
\[
 X_{12}x_1+  X_{11}x_1 -X_{11}X_{12} \le
X_{11}Y_{12}.
\]
Since $\theta = Y_{12}X_{11} - x_1^2 \ge 0$, it thus suffices to show
\begin{eqnarray*}
    X_{12}x_1+  X_{11}x_1 -X_{11}X_{12} &\le& x_1^2 \\
    X_{12}(x_1-X_{11}) &\le& x_1(x_1-X_{11}),
\end{eqnarray*}
which certainly holds because $X_{12} \le x_1$ and $X_{11} \le x_1$.

For the second inequality, assume by contradiction that $\alpha_2^+ < \lam^-$. We claim $\alpha_2^*
< \lam^-$, which by \eqref{eq:alpha2*} is equivalent to
\[
    x_2 - \frac{X_{12} x_1}{X_{11}} < \tfrac12 (1 - \rho)\left(y_2 - \frac{x_1^2}{X_{11}} \right).
\]
From \eqref{eq:alpha2+}, the assumption $\alpha_2^+ < \lam^-$, and the inequality $Y_{12} X_{11} \ge x_1^2$,
we have
\[
    x_2 - \frac{X_{12} x_1}{X_{11}} \le \alpha_2^+ < \lam^-
    = \tfrac12(1 - \rho)(y_2 - Y_{12}) \le \tfrac12(1 - \rho)\left(y_2 - \frac{x_1^2}{X_{11}}\right),
\]
as desired. Since \eqref{equ:2gdet} holds at $\alpha_2 \ge \lam^-$ and
$\alpha_2^*<\lam^-$, the determinant in \eqref{equ:2gdet} is strictly positive at $\lam^-$, which implies that (\ref{equ:5x5})
holds with $\alpha_2$ replaced by $\lam^-$; the logic is identical to that for $\lam^+$ in the proof of Lemma \ref{lem:increase_alpha2}.
Then Lemma \ref{lem:psdimpliespsd} implies that (\ref{equ:4x4first}) holds with $\alpha_2$ replaced by
$\lam^-$, contradicting the
assumption that $\alpha_2^+<\lam^-$, so in fact $\alpha_2^+\ge \lam^-$.
\end{proof}

\subsection{Removing (\ref{equ:4x4first}) and (\ref{equ:4x4second}) does not
affect the projection} \label{sec:removing4x4}

We can now prove the following streamlined version of
Theorem \ref{thm:nobeta}, which requires only one of the four PSD
conditions \eqref{equ:3x3}--\eqref{equ:5x5}.

\begin{theorem} \label{thm:improved}
$\Hcal$ equals the projection onto $(x,X,y,Y_{12})$ of $(x, X, y, Y_{12},\alpha)$ satisfying the convex
constraints (\ref{equ:nobeta:lin1})--(\ref{equ:nobeta:lin4}) and (\ref{equ:5x5}).
\end{theorem}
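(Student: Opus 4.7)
By Theorem \ref{thm:nobeta}, it suffices to show that for every $(x,X,y,Y_{12},\alpha)$ satisfying \eqref{equ:nobeta:lin1}--\eqref{equ:nobeta:lin4} and \eqref{equ:5x5} there is a choice of $\hat\alpha$ (holding $(x,X,y,Y_{12})$ fixed) for which $(x,X,y,Y_{12},\hat\alpha)$ satisfies all of \eqref{equ:nobeta}. Lemma \ref{pro:3x3redundant} already delivers \eqref{equ:3x3} for free, and the observation opening Section \ref{sub:alpha1=0} shows that, given \eqref{equ:5x5}, at most one of \eqref{equ:4x4first} and \eqref{equ:4x4second} can be violated. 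Without loss of generality I take \eqref{equ:4x4first} to be the offender; if it happens to hold as well, then $\hat\alpha := \alpha$ finishes the job. Otherwise the point lacks only \eqref{equ:4x4first}, and the task reduces to producing the missing $\hat\alpha$.

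The plan is to first reduce to the normalized case $\alpha_1 = 0$ and then appeal to the explicit endpoint formulas $\alpha_2^\pm$. If $\alpha_1 = 0$ already, Lemma \ref{lem:alphabounds} forces $\alpha_2 \notin [\alpha_2^-,\alpha_2^+]$, and then Lemma \ref{lem:increase_alpha2} (when $\alpha_2 < \alpha_2^-$) or Lemma \ref{lem:decrease_alpha2} (when $\alpha_2 > \alpha_2^+$) directly supplies a suitable $\hat\alpha$ of the form $(0,\alpha_2^-)$ or $(0,\alpha_2^+)$ for the same $(x,X,y,Y_{12})$. For the general case $\alpha_1 > 0$, Lemma \ref{lem:lacks} produces $(\bar x, \bar X, y, Y_{12}, \bar\alpha)$ with $\bar\alpha_1 = 0$ that still lacks only \eqref{equ:4x4first}, and the base case applied to the bar point yields a value $\hat\alpha_2$ so that $(\bar x, \bar X, y, Y_{12}, (0,\hat\alpha_2))$ satisfies every condition of \eqref{equ:nobeta}.

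The remaining, most delicate step is to lift this bar-feasibility back to the original $(x,X)$: I claim that $(x, X, y, Y_{12}, (\alpha_1, \hat\alpha_2))$ itself satisfies all of \eqref{equ:nobeta}. The linear conditions \eqref{equ:nobeta:lin1}--\eqref{equ:nobeta:lin4} transfer immediately, using the equalities $x_1 - \alpha_1 = \bar x_1$ and $x_2 = \bar x_2$ together with the original hypothesis $0 \le \alpha_1 \le y_1 - Y_{12}$. The PSD matrices appearing in \eqref{equ:3x3} and \eqref{equ:4x4second} are literally identical at the bar and lifted-original points, because $\alpha_1$ enters them only through the expression $x_1 - \alpha_1$. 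For \eqref{equ:4x4first} and \eqref{equ:5x5}, the Schur complement of the lifted-original matrix with respect to its $(y_1 - Y_{12})$ diagonal entry reproduces exactly the corresponding bar matrix (after deletion of its zero first row and column), which is the same manipulation used inside the proof of Lemma \ref{lem:lacks} run in the reverse direction. I expect this Schur-complement bookkeeping---not explicitly packaged by any earlier lemma---to be the main technical obstacle, although the underlying calculation is routine once the correct matrix identities are in place.
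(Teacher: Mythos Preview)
Your proposal is correct and follows essentially the same argument as the paper's proof: reduce via Theorem~\ref{thm:nobeta}, invoke Lemma~\ref{pro:3x3redundant} and the at-most-one-violation observation, handle $\alpha_1=0$ via Lemmas~\ref{lem:alphabounds}, \ref{lem:increase_alpha2}, \ref{lem:decrease_alpha2}, and for $\alpha_1>0$ pass through Lemma~\ref{lem:lacks} and lift back using $\alpha'=(\alpha_1,\hat\alpha_2)$ with the Schur-complement identification. Your description of the lifting step is in fact slightly more explicit than the paper's, which simply cites ``the definition of $\bar X_{11}$ and the Schur complement condition'' for \eqref{equ:4x4first}--\eqref{equ:5x5}.
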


\begin{proof} We must show that if $(x,X,y,Y_{12},\alpha)$
satisfies (\ref{equ:nobeta:lin1})--(\ref{equ:nobeta:lin4}) and (\ref{equ:5x5}),
then $(x,X,y,Y_{12})\in\Hcal$.  By Theorem \ref{thm:nobeta} this is equivalent to
showing that there is an $\alphaprime$ so that $(x,X,y,Y_{12},\alphaprime)$
satisfies all of the constraints in \eqref{equ:nobeta}.

If (\ref{equ:nobeta:lin1})--(\ref{equ:nobeta:lin4}) are satisfied, then
(\ref{equ:3x3}) is redundant by
Proposition \ref{pro:3x3redundant}. Moreover, as described above Lemma \ref{lem:lacks},
if \eqref{equ:5x5} also holds then at most one of \eqref{equ:4x4first}--\eqref{equ:4x4second} can fail to hold.
If both \eqref{equ:4x4first}--\eqref{equ:4x4second} hold then there is nothing to show, so we assume
without loss of generality that \eqref{equ:4x4first} fails to hold; that is,
$(x,X,y,Y_{12},\alpha)$ lacks only \eqref{equ:4x4first}.

Assume first that $\alpha_1=0$. If $\alpha_2<\alpha_2^-$, then by Lemma
\ref{lem:increase_alpha2} we know that $(x,X,y,Y_{12},\alphahat)$ satisfies
\eqref{equ:nobeta}, where $\alphahat=(0,\alpha_2^-)$.  Similarly, if $\alpha_2>\alpha_2^+$, then by
Lemma \ref{lem:decrease_alpha2} we have the same conclusion using $\alphahat=(0,\alpha_2^+)$.
Therefore $(x,X,y,Y_{12})\in\Hcal$.

If $\alpha_1>0$ we apply the transformation in Lemma \ref{lem:lacks}
to obtain $(\xbar,\Xbar,y,Y_{12},\alphabar)$, with $\alphabar=(0,\alpha_2)$,
that lacks only \eqref{equ:4x4first}.  We then apply either Lemma \ref{lem:increase_alpha2}
or Lemma \ref{lem:decrease_alpha2} to obtain $\alphahat=(0,\alphahat_2)$ so that $(\xbar,\Xbar,y,Y_{12},\alphahat)$
satisfies \eqref{equ:nobeta}.  Let $\alphaprime=(\alpha_1,\alphahat_2)$. We claim that $(x,X,y,Y_{12},\alphaprime)$
satisfies \eqref{equ:nobeta} as well. For the linear conditions \eqref{equ:nobeta:lin1}--\eqref{equ:nobeta:lin4} this is
immediate from the facts that both $(x,X,y,Y_{12},\alpha)$ and $(\xbar,\Xbar,y,Y_{12},\alphahat)$ satisfy
\eqref{equ:nobeta:lin1}--\eqref{equ:nobeta:lin4}, and $\xbar_1-\alphabar_1=x_1-\alpha_1$.  Therefore
\eqref{equ:3x3} is also satisfied at $(x,X,y,Y_{12},\alphaprime)$.  The fact that the remaining PSD conditions
\eqref{equ:4x4first}--\eqref{equ:5x5} are satisfied at $(x,X,y,Y_{12},\alphaprime)$ follows from the facts that these conditions are
satisfied at $(\xbar,\Xbar,y,Y_{12},\alphahat)$, $\xbar_1-\alphabar_1=x_1-\alpha_1$, the definition of $\Xbar_{11}$ and the
Schur complement condition.
\end{proof}

\section{Another interpretation}\label{sec:conjecture}

The representation for $\Hcal$ in Theorem \ref{thm:improved}
was obtained by starting with the representation in Theorem
\ref{thm:nobeta} and then arguing that only the single semidefiniteness
constraint \eqref{equ:5x5} was necessary. In this section we
describe an alternative derivation for the representation in Theorem
\ref{thm:improved}. This derivation provides another
interpretation for the conditions of Theorem \ref{thm:improved} and also
leads to a simple conjecture for a representation of $\Hcal^\prime$ as
defined in the Introduction.

The alternative derivation is based on replacing the variables $y$
with $t=e-y$, as was done for the case $n=1$ in the proof of Theorem
\ref{thm:n=1}. Note that each $y_i$ is binary if and only if $t_i$
is binary, and $(y, Y_{12}) \in\RLT_y$ if and only if $(t,T_{12}) \in\RLT_y$ where
$T_{12} = 1 + Y_{12} - y_1 - y_2$. In fact the
linear constraints \eqref{equ:nobeta:lin1}--\eqref{equ:nobeta:lin4}
can be obtained by considering the equations $x_i+s_i+t_i=1$,
$i=1,2$, generating RLT constraints by multiplying each equation in
turn by the variables $(x_j,s_j,t_j)$, $ i=1,2$, and then projecting
onto the variables $(x,X,t,T_{12},\alpha)$, where $\alpha_1\approx
x_1t_2= x_1(1-y_2)$, $\alpha_2\approx x_2t_1=x_2(1-y_1)$,
$T_{12}=1+Y_{12}-y_1-y_2 \approx t_1t_2$. Substituting variables and
applying a symmetric transformation that preserves semidefiniteness, the
PSD condition \eqref{equ:5x5'} can be written in the form
\begin{equation}\label{equ:strengthenedPSD}
\begin{pmatrix}
1-T_{12} & x_1 & x_2 & t_1-T_{12} & t_2-T_{12} \\
x_1 & X_{11} & X_{12} & 0 & \alpha_1 \\
x_2 & X_{12} & X_{22} & \alpha_2 & 0 \\
t_1-T_{12} & 0 & \alpha_2 & t_1-T_{12} & 0\\
t_2-T_{12} & \alpha_1 & 0 & 0 & t_2-T_{12}
\end{pmatrix}\gesem 0.
\end{equation}
The PSD constraint \eqref{equ:strengthenedPSD} has a simple interpretation as a strengthening of the natural PSD
condition
\begin{equation}\label{equ:originalPSD}
\begin{pmatrix}
1 & x_1 & x_2 & t_1& t_2\\
x_1 & X_{11} & X_{12} & 0 & \alpha_1 \\
x_2 & X_{12} & X_{22} & \alpha_2 & 0 \\
t_1 & 0 & \alpha_2 & t_1 &T_{12} \\
t_2 & \alpha_1 & 0 & T_{12} & t_2
\end{pmatrix}\gesem 0.
\end{equation}
The matrix in \eqref{equ:strengthenedPSD} is obtained from the matrix in \eqref{equ:originalPSD} by
subtracting $T_{12}uu\tran$, where $u=(1,0,0,1,1)^T$. This can be interpreted as removing the portion of the matrix
corresponding to $t=e$, or equivalently $y=0$, if the matrix in \eqref{equ:originalPSD} is decomposed into a
convex combination of four matrices corresponding to $t \in \{0, e_1, e_2, e\}$, similar to the
decomposition of $\Hcal$ into a convex combination of $\Hcal_y$, $y\in\set{0,e_1,e_2,e}$ in Section \ref{sec:disj}.
Note in particular that $T_{12}=\lam_0$, as defined in \eqref{equ:lam_y}.

We know that to obtain a representation of $\Hcal$ the condition \eqref{equ:strengthenedPSD} cannot be replaced by
\eqref{equ:originalPSD}; there are solutions $(x,X,y,Y_{12},\alpha)$ that are feasible with the weaker PSD condition
but where $(x,X,y,Y_{12})\notin\Hcal$.  However it appears that the condition \eqref{equ:originalPSD} is
sufficient to obtain a representation of $\Hcal^\prime$.
The following conjecture regarding $\Hcal^\prime$ is supported by extensive numerical computations, but remains
unproved.

\begin{conjecture}\label{con:Hcalprime}
 $\Hcal^\prime$ equals the projection onto $(x,X,y)$ of $(x,X,y,Y_{12},\alpha)$ satisfying the
constraints \eqref{equ:nobeta:lin1}--\eqref{equ:nobeta:lin4} and \eqref{equ:originalPSD}, where $t_1=1-y_1$,
$t_2=1-y_2$ and $T_{12}=1+Y_{12}-y_1-y_2$.
\end{conjecture}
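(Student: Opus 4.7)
My plan is to reduce the conjecture to Theorem~\ref{thm:improved} by showing that any point $(x,X,y,Y_{12},\alpha)$ satisfying the linear constraints \eqref{equ:nobeta:lin1}--\eqref{equ:nobeta:lin4} and the weaker PSD condition \eqref{equ:originalPSD} can be replaced by a point $(x,X,y,\tilde{Y}_{12},\tilde\alpha)$ satisfying the full system of Theorem~\ref{thm:improved}, in particular \eqref{equ:5x5} (equivalently \eqref{equ:strengthenedPSD}). Since the projection onto $\Hcal^\prime$ discards $Y_{12}$, any $\tilde{Y}_{12}\in[\max\set{0,y_1+y_2-1},\min\set{y_1,y_2}]$ is admissible; this extra freedom in choosing $\tilde{Y}_{12}$, unavailable when representing $\Hcal$ itself, is precisely what the conjecture exploits. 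The easy inclusion---that $\Hcal^\prime$ is contained in the claimed projection---is handled in the standard way: for each extreme point $(x,xx\tran,y)$ with $y\in\set{0,1}^2$, set $\alpha_j=x_j(1-y_{3-j})$ and $Y_{12}=y_1y_2$, so the matrix in \eqref{equ:originalPSD} becomes the rank-one outer product of $(1,x_1,x_2,t_1,t_2)\tran$; the linear conditions are immediate in each of the four cases, and convexity completes this direction.

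For the reverse inclusion, the central identity from Section~\ref{sec:conjecture} is that \eqref{equ:strengthenedPSD} equals \eqref{equ:originalPSD} minus $T_{12}\,uu\tran$ with $u=(1,0,0,1,1)\tran$. I would therefore parameterize $\tilde{T}_{12}$ over its feasible interval $[\max\set{0,1-y_1-y_2},\min\set{1-y_1,1-y_2}]$ and try to exhibit a compatible $\tilde\alpha$---for example by scaling $\alpha$, or by solving the small system forced by the bounds $0\le\tilde\alpha_j\le y_j-\tilde{Y}_{12}$---such that the residual matrix remains PSD. An appealing alternative, inspired by the convex-combination argument at the end of Section~\ref{sec:conjecture} and by the $\CP$-based proof of Theorem~\ref{thm:n=1}, is to construct directly a decomposition of the \eqref{equ:originalPSD} matrix into at most four nonnegative rank-one summands corresponding to $y\in\set{0,e_1,e_2,e}$: the zero entries at positions $(2,4)$ and $(3,5)$, combined with componentwise nonnegativity, force any such $\CP$ summand to be of one of these four ``types,'' which would yield the desired convex combination for $\Hcal^\prime$ immediately.

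The main obstacle is that, unlike the argument for $n=1$, one cannot appeal to the general equivalence $\DNN=\CP$ here: the support graph of the \eqref{equ:originalPSD} matrix contains the chordless $4$-cycle on nodes $\set{2,3,4,5}$, so $\DNN\ne\CP$ for a generic $5\times 5$ matrix with this sparsity pattern. The proof will therefore need either to establish a specialized $\CP$ decomposition that uses the specific numerical relations forced by the other constraints (for instance by first handling the degenerate cases $X_{11}=0$, $y_j-Y_{12}=0$, and $Y_{12}=0$, then inductively peeling off rank-one terms associated with $y=0$ and $y=e$), or to push the parametric PSD analysis through a nontrivial case split on where $\tilde{T}_{12}$ lies in its interval, using continuity of eigenvalues and concavity of principal-minor determinants in the spirit of Lemmas \ref{lem:increase_alpha2}--\ref{lem:decrease_alpha2}. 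I expect this step---bridging from the $\DNN$-style matrix condition to a $\CP$-style convex-combination representation---to be the genuine difficulty, and it is presumably why the conjecture has so far resisted proof despite the strong numerical evidence.
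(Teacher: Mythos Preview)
The paper does not prove this statement: it is explicitly presented as a conjecture that ``is supported by extensive numerical computations but remains unproved.'' There is therefore no proof in the paper to compare your proposal against.

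Your proposal is not a proof either, and to your credit you say so. The easy inclusion is fine. For the hard inclusion you sketch two routes---adjusting $(\tilde Y_{12},\tilde\alpha)$ to upgrade \eqref{equ:originalPSD} to \eqref{equ:strengthenedPSD} and then invoke Theorem~\ref{thm:improved}, or directly producing a $\CP$ decomposition of the $5\times5$ matrix---and you correctly identify the obstruction to the second: the support graph on $\{2,3,4,5\}$ is a chordless $4$-cycle, so $\DNN=\CP$ fails on this pattern in general, and nothing in the linear constraints obviously forces complete positivity. The first route is the natural analogue of Section~\ref{sec:psd}, but there the analysis already required delicate case work to adjust a \emph{single} scalar $\alpha_2$ under one PSD constraint; here you would need to move $\tilde Y_{12}$ and both components of $\tilde\alpha$ simultaneously while preserving \eqref{equ:nobeta:lin1}--\eqref{equ:nobeta:lin4} and a $5\times5$ PSD condition, and neither your scaling suggestion nor the ``peel off rank-one terms'' idea is developed into an actual argument. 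So your outline points in plausible directions and diagnoses the difficulty accurately, but it leaves the gap exactly where the paper leaves it.
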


\noindent Note that \eqref{equ:nobeta:lin1}--\eqref{equ:nobeta:lin4}
and (\ref{equ:originalPSD}) amount to the relaxation of $(x,xx^T,y)$,
which enforces PSD and RLT in the $(x,X,y,Y_{12})$ space and also
exploits the binary nature of $y$. In other words, the standard approach
for creating a strong SDP relaxation would be sufficient to capture the
convex hull of $(x,X,y)$ in this case, similar to the case of $n=1$
as shown in the proof of  Theorem \ref{thm:n=1}, as well as the
characterization of QPB for $n=2$ from \cite{Anstreicher.Burer.2010}. 

\bibliographystyle{abbrv}
\bibliography{switching}

\end{document}